\DeclareMathOperator{\Diff}{\zap{Diff}}
\def\cyr{%
\renewcommand\rmdefault{wncyr}%
\renewcommand\sfdefault{wncyss}%
\renewcommand\encodingdefault{OT2}%
\normalfont
\selectfont}
\DeclareMathAlphabet{\zap}{OT1}{pzc}{m}{it}
\DeclareTextFontCommand{\textcyr}{\cyr}
\def\be{\begin{equation}}
\def\ee{\end{equation}}
\def\bea{\begin{eqnarray*}}
\def\eea{\end{eqnarray*}}
\def\CC{\mathbb C}
\newtheorem{main}{Theorem}
\DeclareMathOperator{\trace}{trace}
\DeclareMathOperator{\Hom}{Hom}
\newtheorem{thm}{Theorem}[section]
\newtheorem{lem}{Lemma}
\newtheorem{prop}[thm]{Proposition}
\newtheorem*{cor}{Corollary}
\def\ZZ{{\mathbb Z}}
\def\RR{{\mathbb R}}
\def\CP{{\mathbb C \mathbb P}}
\begin{document}

\title{Einstein Manifolds, Self-Dual Weyl Curvature, and Conformally K\"ahler Geometry}

\author{Claude LeBrun\thanks{Supported in part by   NSF grant DMS-1906267  and a Simons Fellowship. }\\Stony Brook University}

\date{}
\maketitle

\begin{abstract}
Peng Wu \cite{pengwu} recently announced a beautiful  characterization of conformally K\"ahler, Einstein metrics 
of positive scalar curvature on compact oriented $4$-manifolds via the condition  $\det (W^+) > 0$. 
In this note, we  buttress his claim by providing an entirely different proof of his result.
We then   present further consequences of  our method, which   builds  on techniques previously developed in  
 \cite{lebcake}. 
\end{abstract}

\section{Introduction}

Recall that a Riemannian metric $h$ is said to be {\em Einstein} if it has constant Ricci curvature. This is 
equivalent to saying that it solves the {\em Einstein equation}
\begin{equation}
\label{einstein}
r= \lambda h,
\end{equation}
where $r$ is the Ricci tensor of $h$,  and where the  real constant $\lambda$  (which is  not specified in advance)   is called the {\em Einstein constant} of $h$.
Given a smooth compact manifold $M$, it is a  fundamental  problem of modern Riemannian geometry  to completely understand the  moduli
space
$$\mathscr{E}(M) = \{ \mbox{Solutions of } \eqref{einstein}\}/(\Diff (M) \times \RR^\times),$$
of Einstein metrics on $M$, where the diffeomorphism group $\Diff (M)$ of course acts on solutions of \eqref{einstein} via pull-backs, while the  group of positive reals $\RR^+$ 
acts on solutions by constant rescalings. One key goal  of this  paper is  to    study this problem for  a  specific class of  $4$-manifolds $M$.  

Our focus on dimension four  reflects the degree to which this dimension  seems to represent a sort of ``Goldilocks zone''  for the Einstein equation  \eqref{einstein}. 
In lower dimensions, Einstein metrics necessarily have constant sectional curvature, making them  locally  boring --- albeit  still globally interesting. 
In higher dimensions, on the other hand, 
Einstein metrics  turn out to exist in surprising profusion, leading to  wildly disconnected Einstein moduli spaces   on  even the most familiar manifolds \cite{bohm,bgk,wazi}. 
 But, by contrast, dimension four seems ``just right'' for \eqref{einstein}, 
 because   four-dimensional Einstein metrics display  such a remarkably well-balanced  combination of local
 flexibility and global rigidity that  their geometry  often 
 seems to be optimally adapted to   the manifold where they reside. 
 For example, if 
  $M^4$ is a compact quotient of real or complex-hyperbolic space, or a $4$-torus, or $K3$, 
then the  Einstein  moduli space  $\mathscr{E}(M)$ is actually explicitly known, and   in each case actually  turns out  to  be
 {connected} \cite{bergb,bcg,hit,lmo}.

 Unfortunately, however, there are very few other $4$-manifolds $M$ whose  Einstein moduli spaces $\mathscr{E}(M)$ are both 
 non-empty and completely understood.  
 In particular, we still only partially understand  the Einstein moduli spaces of  
 the  smooth compact  $4$-manifolds  that arise  as {\sf del Pezzo surfaces}. 
 Recall that a compact complex $2$-manifold $(M^4,J)$  is called a  {del Pezzo surface} iff it has ample anti-canonical 
line bundle.   Up to diffeomorphism, there are
exactly ten such manifolds, namely  $S^2 \times S^2$ and the nine connected sums $\CP_2\# m\overline{\CP}_2$,  $m = 0, 1, \ldots, 8$. 
These are exactly \cite{chenlebweb}  the   oriented smooth compact  
$4$-manifolds that  admit both an Einstein metric with $\lambda > 0$ and an orientation-compatible symplectic structure. However, on any of these spaces, 
every known Einstein metric  is conformally K\"ahler. 
In most cases,   these currently-known Einstein metrics  are actually K\"ahler-Einstein \cite{sunspot,tian},
but in exactly two cases  they are instead constructed \cite{chenlebweb,lebhem10} as non-trivial conformal rescalings of
extremal K\"ahler metrics. This situation has  prompted the author to elsewhere characterize  the known Einstein metrics on 
del Pezzo surfaces by means of two different non-K\"ahler criteria. First, they are \cite{lebuniq} the only $\lambda >0$ Einstein metrics on compact $4$-manifolds
that are Hermitian with respect to an integrable complex structure. Perhaps more compellingly,  they are also the only Einstein metrics on compact  oriented $4$-manifolds
for which the self-dual Weyl curvature $W^+$ is everywhere positive in the direction of a global self-dual harmonic $2$-form \cite{lebcake}. 
Because the latter characterization merely depends on the Einstein metric belonging to an explicit open set in the space of Riemannian metrics,
it in particular  allows one to prove that,  on  any del Pezzo $M^4$,  the known Einstein metrics exactly
sweep out  a single  connected component in the 
Einstein moduli space $\mathscr{E}(M)$.

Still, both of  these previous characterizations suffer from the defect  of not being formulated in terms of  a purely local condition on the curvature tensor. 
It is for this reason that  a new characterization recently announced by Peng Wu \cite{pengwu}, formulated purely in terms of a property of the self-dual Weyl curvature, 
 represents an important advance in the subject.

To explain  Wu's criterion, let us first recall that   the bundle  $\Lambda^2$ of 2-forms over an
oriented Riemannian $4$-manifold $(M,h)$  naturally    decomposes, in a conformally  invariant
way, 
as a direct sum
$$
\Lambda^2 = \Lambda^+ \oplus \Lambda^-  
$$
of  the   $(\pm 1)$-eigenspaces $\Lambda^\pm$ of 
the Hodge star 
operator. Here, sections of $\Lambda^+$ are called 
self-dual $2$-forms, while sections of 
$\Lambda^-$ are  called 
anti-self-dual $2$-forms. But since the  Riemann curvature tensor may be identified with 
 a self-adjoint linear map 
$${\mathcal R} : \Lambda^2 \to \Lambda^2$$
it can therefore be decomposed into  irreducible  pieces 
$$
{\mathcal R}=
\left(
\mbox{
\begin{tabular}{c|c}
&\\
$W^++\frac{s}{12}I$&$\stackrel{\circ}{r}$\\ &\\
\cline{1-2}&\\
$\stackrel{\circ}{r}$ & $W^-+\frac{s}{12}I$\\&\\
\end{tabular}
} \right) 
$$
where  $s$ is the {scalar curvature},
$\stackrel{\circ}{r}=r-\frac{s}{4}g$ is  the   
     trace-free Ricci curvature, and where 
$W^\pm$ are the trace-free pieces of the appropriate blocks.
The corresponding pieces  ${W^{\pm a}}_{bcd}$ of the Riemann curvature tensor 
are both conformally invariant, and 
 are  respectively called the
{\em self-dual} and {\em anti-self-dual Weyl curvature} tensors. 

Wu observes that the self-dual Weyl curvature $W^+:\Lambda^+ \to \Lambda^+$ 
of 
any   conformally K\"ahler, Einstein metric on any del Pezzo surface  
satisfies $\det (W^+) > 0$. He then offers a rather terse and cryptic proof 
that the converse is also true. One main purpose of this article
is to provide an entirely different proof of Wu's beautiful result:

\begin{main} 
\label{whoo}
Let $(M,h)$ be a  simply-connected compact
 oriented Einstein $4$-manifold, and suppose that its self-dual Weyl curvature $W^+: \Lambda^+\to \Lambda^+$ 
  satisfies $\det (W^+) > 0$ at every point
of $M$. Then $h$ is conformal to an 
  orientation-compatible extremal K\"ahler metric $g$ on $M$.  
   \end{main}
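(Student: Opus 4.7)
The plan is to reduce Theorem~\ref{whoo} to the characterization proved in \cite{lebcake}: on a compact oriented $4$-manifold, an Einstein metric $h$ is conformal to an orientation-compatible extremal K\"ahler metric whenever there exists a self-dual harmonic $2$-form $\omega$ for which $\langle W^+(\omega),\omega\rangle > 0$ at every point. The work therefore reduces to manufacturing such an $\omega$ from the pointwise curvature hypothesis $\det(W^+) > 0$.

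First I would unpack the algebraic content of the hypothesis. Since $W^+:\Lambda^+\to\Lambda^+$ is self-adjoint and trace-free, $\det(W^+)>0$ forces $W^+$ to have exactly one positive eigenvalue $\mu > 0$ and two negative eigenvalues at every point. The positive eigenspace is consequently a smooth real line sub-bundle $L\subset\Lambda^+$; simple-connectivity of $M$ gives $H^1(M;\mathbb{Z}/2)=0$, so $L$ is trivializable, and a global smooth unit section $\omega_1\in\Gamma(L)$ exists with $W^+(\omega_1)=\mu\,\omega_1$ and $\mu>0$ everywhere. In parallel, one must verify that $b^+(M)\geq 1$ so that nonzero self-dual harmonic forms exist at all; this should follow from the Einstein condition $\mathring{r}=0$ and the strict positivity $|W^+|^2>0$ via the Gauss--Bonnet and signature identities, which on Einstein $4$-manifolds combine to give $2\chi+3\tau=\frac{1}{4\pi^2}\int_M\bigl(\tfrac{s^2}{24}+2|W^+|^2\bigr)\,dv>0$, ruling out the exceptional small-topology cases where $b^+$ could vanish.

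The chief obstacle, and the technical heart of the argument, is to show that \emph{some} self-dual harmonic $\omega$ lies in the open $W^+$-positive cone at every point of $M$. The key analytic tools are the Einstein Weitzenb\"ock identity for a harmonic self-dual $\omega$,
\[
0 \;=\; \nabla^{*}\nabla\,\omega \;+\; \tfrac{s}{3}\,\omega \;-\; 2\,W^{+}(\omega),
\]
together with the second Bianchi identity, which on an Einstein manifold yields $\delta W^{+}=0$. A promising concrete route --- in the spirit of Derdzi\'nski's classical work --- is to verify directly that $\mu^{1/3}\omega_1$ is closed, hence (being self-dual) harmonic, by exploiting $\delta W^{+}=0$ to express $d\omega_1$ in terms of $d\mu$ and then collecting terms. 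Failing a direct verification, one may start with any harmonic self-dual $\omega$ furnished by $b^+\geq 1$ and analyze the nonnegative scalar $u = |\omega|^2|\omega_1|^2-\langle\omega,\omega_1\rangle^2$, measuring the failure of $\omega$ to be a multiple of $\omega_1$; a Bochner-type computation fed by the Weitzenb\"ock identity above, combined with a maximum-principle argument, should force the projection of $\omega$ onto $L^{\perp}$ to be small enough that $\langle W^{+}(\omega),\omega\rangle>0$ pointwise. I expect this sign-pinning step to be the most delicate part, since the two negative eigenvalues of $W^+$ are not a priori equal, and extracting the requisite Kato-type refinement will require using $\delta W^+=0$ in an essential way.

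Once such an $\omega$ is produced, the theorem of \cite{lebcake} applies verbatim and yields the desired orientation-compatible extremal K\"ahler metric $g$ in the conformal class of $h$, completing the proof.
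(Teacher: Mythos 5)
Your overall strategy --- trivialize the positive eigenline $L\subset\Lambda^+$ using $\pi_1(M)=0$ and then invoke the characterization from \cite{lebcake} once a self-dual \emph{harmonic} $2$-form lying in the $W^+$-positive cone is found --- is a legitimate framing, and your opening algebraic reduction (exactly one positive eigenvalue, of multiplicity one, hence a smooth line bundle $L$ that is trivial because $H^1(M;\ZZ_2)=0$) coincides with the paper's. But the proposal stops exactly where the real work begins, and both routes you sketch for the ``sign-pinning'' step are problematic. The direct route --- showing that a power of $\mu$ times $\omega_1$ is closed by ``expressing $d\omega_1$ in terms of $d\mu$'' from $\delta W^+=0$ --- cannot work pointwise: no such local identity exists without Derdzi\'nski's degenerate-spectrum hypothesis (middle eigenvalue equal to the bottom one), which is unavailable here. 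The paper instead proves closedness (indeed parallelism) of the rescaled eigenform by a \emph{global} integral argument: it fixes the conformal gauge \eqref{shrewd}, so that $\alpha_g f\equiv 1$ as in \eqref{cunning}, exploits the weighted conformal invariance $\delta(fW^+)=0$ to obtain the Weitzenb\"ock formula \eqref{initio}, pairs that formula against $\omega\otimes\omega$, integrates by parts, and uses the two elementary inequalities \eqref{negatron} and \eqref{megatron} together with \eqref{harmless} to conclude $\int_M|\nabla\omega|^2\,d\mu\leq 0$. None of these ingredients appears in your proposal, and your fallback Bochner/maximum-principle scheme for $u=|\omega|^2|\omega_1|^2-\langle\omega,\omega_1\rangle^2$ is left entirely conjectural, as you yourself acknowledge.

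There is also a concrete error in your preliminary step: $2\chi+3\tau>0$ does \emph{not} imply $b_+(M)\geq 1$. For a simply-connected $4$-manifold one has $2\chi+3\tau=4+5b_+-b_-$, which can be positive with $b_+=0$ (e.g.\ whenever $b_-\leq 3$). In the paper, $b_+(M)=1$ is a \emph{consequence} of the theorem (via Yau's result that positive-scalar-curvature K\"ahler surfaces have $h^{2,0}=0$), not an input; your approach requires it as an input and does not actually establish it. Finally, note that the paper does not route Theorem \ref{whoo} through \cite{lebcake} at all: it proves directly that $\omega$ is parallel, so that $g$ is K\"ahler with $s>0$, and then obtains extremality and the del Pezzo structure from \cite{lebhem}.
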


With \cite{lebhem} and 
\cite{lebcake}, this now immediately implies the following:
   
  \begin{cor} Any  simply-connected compact
 oriented Einstein $4$-manifold   with $\det (W^+) > 0$  is orientedly diffeomorphic to a del Pezzo surface. 
 Conversely, the underlying smooth oriented 
 $4$-manifold $M$ of  any  del Pezzo surface carries Einstein metrics $h$ with $\det (W^+)> 0$, 
 and these  sweep out exactly  one connected component of the moduli space $\mathscr{E}(M)$ of Einstein metrics on $M$.
  \end{cor}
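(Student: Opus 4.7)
The plan is to combine Theorem A, the diffeomorphism classification of \cite{lebhem}, and the moduli-component argument of \cite{lebcake}, with no essentially new technical input required; this is genuinely a corollary in the strict sense.

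For the forward direction, suppose $(M,h)$ is simply-connected, compact, oriented, Einstein, and satisfies $\det(W^+)>0$. Theorem A supplies an orientation-compatible extremal K\"ahler metric $g\in [h]$. On any K\"ahler $4$-manifold, $W^+:\Lambda^+\to\Lambda^+$ has eigenvalues $(s_g/6,-s_g/12,-s_g/12)$, so $\det(W^+_g)$ has the same sign as $s_g$. Since $W^+$, viewed as an endomorphism of $\Lambda^+$, merely rescales by a positive conformal factor under $h\leftrightarrow g$, the sign of $\det(W^+)$ is a conformal invariant, and we conclude $s_g>0$. Thus $g$ is an extremal K\"ahler metric of positive scalar curvature, and the classification in \cite{lebhem} then forces the underlying smooth oriented $4$-manifold of $(M,g,J)$ to be orientedly diffeomorphic to a del Pezzo surface, that is, to $S^2\times S^2$ or to $\CP_2\#m\overline{\CP}_2$ for some $0\le m\le 8$.

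For the converse, each of the ten del Pezzo diffeotypes carries an extremal K\"ahler metric $g$ of positive scalar curvature: K\"ahler--Einstein in eight cases by \cite{tian,sunspot}, and conformally K\"ahler on $\CP_2\#\overline{\CP}_2$ and $\CP_2\#2\overline{\CP}_2$ by \cite{chenlebweb,lebhem10}. In all ten cases, Derdzinski duality (trivial in the K\"ahler--Einstein case) places an Einstein representative $h$ in the conformal class $[g]$; the K\"ahler eigenvalue formula gives $\det(W^+_g)=s_g^3/864>0$, and this positivity transfers to $h$ by the conformal invariance of the sign of $\det(W^+)$.

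It remains to verify that the set $\mathscr{U}(M)\subset\mathscr{E}(M)$ of Einstein metrics with $\det(W^+)>0$ is an entire connected component. Since pointwise positivity of $\det(W^+)$ is a $C^2$-open and $(\Diff(M)\times\RR^+)$-invariant condition on Riemannian metrics, $\mathscr{U}(M)$ is open in $\mathscr{E}(M)$. The main obstacle is upgrading openness to ``union of components,'' and this is precisely the step where one reuses the argument of \cite{lebcake}: the forward direction above shows every point of $\mathscr{U}(M)$ is represented by an extremal K\"ahler metric on the del Pezzo $M$, while Bando--Mabuchi uniqueness (in the eight K\"ahler--Einstein cases) together with the bespoke uniqueness of the Chen--LeBrun--Weber metrics on $\CP_2\#\overline{\CP}_2$ and $\CP_2\#2\overline{\CP}_2$ forces this extremal representative to be unique modulo $\Aut$ and scale. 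Consequently $\mathscr{U}(M)$ collapses to a single point in the moduli space $\mathscr{E}(M)$, and being open in a suitably separated moduli it is then a connected component, completing the proof.
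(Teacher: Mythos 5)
Your first two paragraphs are fine and agree with what the paper intends: Theorem A plus the conformal invariance of the sign of $\det(W^+)$ and the K\"ahler eigenvalue computation $(s/6,-s/12,-s/12)$ reduce the forward direction to the classification of \cite{lebhem}, and the converse existence statement is exactly the catalogue from \cite{tian,sunspot,chenlebweb,lebhem10} combined with Derdzi\'nski's ansatz. The paper itself offers no more than these citations, so up to that point you are reconstructing the intended argument.

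The last paragraph, however, contains a genuine error. The set $\mathscr{U}(M)$ of conformally K\"ahler Einstein metrics modulo $\Diff(M)\times\RR^+$ is \emph{not} a single point for most del Pezzo diffeotypes. Bando--Mabuchi uniqueness is uniqueness of the K\"ahler--Einstein metric for a \emph{fixed} complex structure, but on $\CP_2\# m\overline{\CP}_2$ with $5\le m\le 8$ the del Pezzo complex structures themselves form a positive-dimensional moduli space (of complex dimension $2m-8$), and non-biholomorphic (and non-anti-biholomorphic) complex structures yield non-isometric K\"ahler--Einstein metrics; so $\mathscr{U}(M)$ is a positive-dimensional connected set, not a point. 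Moreover, even where $\mathscr{U}(M)$ is connected, your inference ``open and connected, hence a component'' is invalid --- an open interval in $\RR$ is open and connected but is not a component. What is actually needed, and what the cited argument of \cite{lebcake} supplies, is that $\mathscr{U}(M)$ is simultaneously \emph{open} (because the characterization of these metrics is by membership in an explicit open set of Riemannian metrics), \emph{connected} (because the moduli of del Pezzo complex structures on a fixed diffeotype is connected and the distinguished Einstein metric depends continuously on the complex structure), and \emph{closed} in $\mathscr{E}(M)$ (a convergence/compactness statement for the family of conformally K\"ahler Einstein metrics: a sequence of them that converges to a smooth Einstein metric on $M$ converges to another such metric). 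The closedness step is the substantive input you are missing; without it, or with the false single-point claim in its place, the ``exactly one connected component'' conclusion does not follow. The paper sidesteps all of this by quoting the corollary of \cite{lebcake} verbatim, which is the legitimate route here.
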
 
   
    Note that the simple-connectivity hypothesis  is essential in Theorem \ref{whoo}. 
    Otherwise,  a counter-example would be given by 
    $(S^2 \times S^2)/\ZZ_2$,
obtained by dividing  the Riemannian product of two round, unit-radius  $2$-spheres  
 by  the simultaneous  action of  the antipodal map on both factors. However, Proposition \ref{rule} below
shows that this example is typical, in the following sense: for a compact oriented Einstein manifold with $\det (W^+) >0$, 
the only possible   fundamental groups are $\{ 1\}$ and  $\{\pm 1\}$. Thus,  one can always reduce  to the
simply-connected case by at worst passing to a double cover. 

While the method of proof used here is quite different from Wu's, both approaches are deeply  indebted to the pioneering work of 
Derdzi\'{n}ski \cite{derd}. 
In fact, the method developed here also naturally  yields results about more general 
  $4$-manifolds  with harmonic self-dual Weyl curvature:
   
\begin{main} 
\label{whoa}
Let $(M,h)$ be a compact 
 oriented Riemannian  $4$-manifold whose self-dual Weyl curvature $W^+$ is 
 harmonic,  in the sense that 
  $$\delta W^+:=-\nabla\cdot W^+=0.$$ 
  Suppose  moreover that  $b_+(M)\neq 0$, and that $h$ satisfies   $\det (W^+) > 0$ at every point of $M$. 
 Then $M$ admits an orientation-compatible K\"ahler metric $g$ of scalar curvature $s>0$ such that 
 $h=s^{-2}g$.  \end{main}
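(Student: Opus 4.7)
The plan is to reduce the theorem to Derdzi\'nski's structural theorem for $4$-manifolds with harmonic self-dual Weyl curvature. The key initial observation is that $\det(W^+)>0$, combined with the trace-free condition $\alpha+\beta+\gamma=0$ on the eigenvalues $\alpha\geq\beta\geq\gamma$ of $W^+$, forces the sign pattern $(+,-,-)$ at every point: exactly one positive and two negative eigenvalues. Hence $\alpha>0>\beta\geq\gamma$ everywhere, so $\alpha$ is a simple positive eigenvalue, $W^+$ is nowhere zero, and the top eigen-line $L\subset\Lambda^+$ is a well-defined smooth real line subbundle. This places us precisely in the setting of Derdzi\'nski's theorem, which should supply a conformal rescaling $g=c\,\alpha^{2/3}h$ (for a universal constant $c$) that is locally K\"ahler, with K\"ahler form proportional to the top eigen-$2$-form of $W^+$.

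To make Derdzi\'nski's argument go through in the present setting of merely $\delta W^+=0$ (rather than the Einstein hypothesis of his original theorem), one must show that the two smaller eigenvalues coincide everywhere, $\beta=\gamma$, so that $W^+$ exhibits the K\"ahler eigenvalue pattern $(\alpha,-\alpha/2,-\alpha/2)$. The plan is to derive this by decomposing the harmonicity equation $\delta W^+=0$ along the orthogonal eigen-decomposition of $W^+$ and using the strict pinching $\det W^+>0$ to conclude that the resulting overdetermined system forces $\beta\equiv\gamma$. I expect this to be the technically most delicate step: it is where the hypothesis $\det W^+>0$ does genuine work beyond merely guaranteeing simplicity of $\alpha$.

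The hypothesis $b_+(M)\neq 0$ will then be used to promote this local K\"ahler structure to a global one, by forcing $L$ to be globally orientable. The plan is to argue by contradiction: if $L$ is not orientable, pass to its orientation double cover $\pi\colon\tilde M\to M$, on which $\pi^*L$ becomes trivializable, and the previous step produces a globally defined orientation-compatible K\"ahler metric $\tilde g$ on $\tilde M$ with K\"ahler form $\omega$. The deck involution $\sigma$ reverses the orientation of $\pi^*L$, so $\sigma^*\omega=-\omega$; hence $\sigma$ is antiholomorphic, and (as an antiholomorphic involution of a complex surface) orientation-preserving. Since the positivity $\alpha>0$ forces $s_{\tilde g}>0$ via the K\"ahler identity $\alpha=s/6$, the Bochner--Kodaira vanishing theorem applied to holomorphic $(2,0)$-forms gives $p_g(\tilde M)=0$; the K\"ahler Hodge decomposition then collapses to $H^2_+(\tilde M;\RR)=\RR\omega$, on which $\sigma^*$ acts as $-1$. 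Consequently $b_+(M)=\dim H^2_+(\tilde M;\RR)^{\sigma^*}=0$, contradicting the hypothesis. Thus $L$ is orientable, $\omega$ is globally defined, and $(g,\omega)$ is an orientation-compatible K\"ahler metric on $M$.

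Finally, the universal constant in $g=c\,\alpha^{2/3}h$ is pinned down by matching the K\"ahler identity $\alpha_g=s/6$ with the conformal transformation law for $W^+$ eigenvalues, yielding the desired relation $h=s^{-2}g$; the sign $s>0$ is immediate from $\alpha>0$.
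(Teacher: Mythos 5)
Your opening reductions (the sign pattern $(+,-,-)$ of the eigenvalues forced by $\operatorname{trace}W^+=0$ and $\det W^+>0$, the smoothness of $\alpha$ and of the eigen-line $L\subset\Lambda^+$, the double-cover argument showing that non-orientability of $L$ forces $b_+(M)=0$ via Yau's vanishing of $p_g$ and the relation $\sigma^*\omega=-\omega$, and the final normalization giving $h=s^{-2}g$) all agree with the paper. The problem is the middle of your argument, which is where the entire analytic content of the theorem lives and which you have left as a plan rather than a proof. You propose to show that the two lower eigenvalues coincide, $\beta\equiv\gamma$, by ``decomposing the harmonicity equation $\delta W^+=0$ along the eigen-decomposition of $W^+$'' and letting the strict positivity of $\det W^+$ force degeneration, so that Derdzi\'nski's theorem on harmonic $W^+$ with degenerate spectrum can then be invoked. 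No such pointwise or local mechanism is exhibited, and there is good reason to doubt one exists: $\delta W^+=0$ is a first-order, essentially underdetermined condition on the metric, and nothing in a fibrewise analysis of it pins the middle eigenvalue to the bottom one. The degeneration of the spectrum is a \emph{global} consequence of compactness, not a local one.

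The paper's route is quite different at exactly this point, and is worth internalizing. One does not first prove $\beta=\gamma$ and then quote Derdzi\'nski; instead, one exploits the conformally weighted consequence $\delta(fW^+)=0$ of harmonicity to obtain the second-order Weitzenb\"ock identity \eqref{initio} for $fW^+$ in the rescaled metric $g=\alpha_h^{2/3}h$, pairs it with $\omega\otimes\omega$ where $\omega$ is the unit-normalized top eigenform, and integrates by parts over the compact manifold. The two algebraic inputs are \eqref{negatron} --- that $W^+(\nabla_e\omega,\nabla^e\omega)\le 0$ because $\nabla\omega$ takes values in $\omega^\perp\subset\Lambda^+$, where $W^+$ is negative semi-definite precisely because $\det W^+>0$ --- and the elementary bound \eqref{megatron}, $|W^+|^2\ge\frac32\alpha^2$. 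Combined with the Hodge-Laplacian Weitzenb\"ock formula \eqref{harmless}, the integral collapses to $0\ge\frac12\int_M|\nabla\omega|^2\,d\mu$, whence $\nabla\omega\equiv 0$ and $g$ is K\"ahler; only \emph{then} does the spectrum degenerate to $(s/6,-s/12,-s/12)$, as an output rather than an input. Until you either carry out this global Bochner step or supply a genuine proof that $\delta W^+=0$ and $\det W^+>0$ force degenerate spectrum, your argument has a gap at its central step, since Derdzi\'nski's theorem cannot be applied without the degeneracy hypothesis.
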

 
 Conversely, if $(M^4,g,J)$ is a K\"ahler surface of scalar curvature $s> 0$, 
 Derdzi\'{n}ski discovered  that $h= s^{-2}g$ then satisfies both $\delta W^+=0$ and  $\det (W^+) > 0$. This makes it completely 
 straightforward to classify the smooth compact 
 oriented  $4$-manifolds that   carry metrics $h$ of the type covered  by Theorem \ref{whoa}.
Indeed, 
if a  compact complex surface $(M,J)$  admits 
K\"ahler metrics $g$  with $s> 0$, it   is necessarily rational or ruled \cite{yauruled}, and,  conversely, any rational or ruled surface 
has arbitrarily small deformations that admit such metrics \cite{hitpos,mhsung}. Up to oriented diffeomorphism, the  complete
list of the $4$-manifolds $M$ that admit such metrics $h$ therefore exactly  consists  of  $\CP_2$, $(\Sigma^2 \times S^2) \# k \overline{\CP}_2$, 
 and $\Sigma^2 \varkappa S^2$, where  $\Sigma$ is any compact orientable $2$-manifold, $k$ is any non-negative integer, and 
$\Sigma^2 \varkappa S^2$ is the non-trivial oriented $2$-sphere bundle over $\Sigma$.  Notice, however,  that 
 the moduli space of such metrics  on any  of these manifolds 
is always infinite-dimensional, in marked contrast to the Einstein case.

 We should  also point out that 
 dropping the  $b_+(M)\neq 0$  hypothesis in Theorem \ref{whoa} only changes the story very  slightly. Indeed,  as is shown in Proposition \ref{mule} below, 
 any  compact
 oriented  $(M^4,h)$  with $\delta W^+=0$,  $\det (W^+) > 0$, and $b_+(M)=0$  has a double-cover $\hat{M}\to M$ 
 with $b_+(\hat{M})=1$. Theorem \ref{whoa} therefore  applies to the pull-back of $h$ to this double cover.    

These results are all proved  in \S \ref{gist} below. 
  Finally, in \S \ref{grist},  we then  prove a generalization     that does not explicitly require $\det(W^+)$  to be positive:

\begin{main}
\label{cadenza}
Let $(M,h)$ be a compact 
 oriented Riemannian  $4$-manifold that satisfies $\delta W^+=0$. If 
 $$W^+\neq 0 \quad \mbox{and}\quad  \det (W^+) \geq - \frac{5\sqrt{2}}{21\sqrt{21}}|W^+|^3$$
 everywhere on $M$, then  actually  $\det (W^+) > 0$. Thus, after at worst passing to a double cover $\hat{M}\to M$,
 $h$  becomes conformally K\"ahler,  in the manner described by Theorem \ref{whoa}. 
In particular, if $(M,h)$ is a simply-connected Einstein manifold, it  actually  falls under the purview of  Theorem \ref{whoo}. 
\end{main}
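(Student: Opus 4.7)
The strategy is to upgrade the hypothesized non-strict lower bound on $\det W^+$ to strict positivity $\det W^+ > 0$ everywhere. Once this is achieved, Theorem \ref{whoa} yields the conformally K\"ahler structure (after the double-cover reduction described just below Theorem \ref{whoa} in the case $b_+(M) = 0$); in the simply-connected Einstein case, Theorem \ref{whoo} then applies directly.

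The argument for strict positivity is a Bochner computation. Its two main ingredients are Derdzi\'nski's Weitzenb\"ock identity for harmonic $W^+$,
\[
\tfrac{1}{2}\Delta |W^+|^2 = |\nabla W^+|^2 - \tfrac{s}{2}|W^+|^2 + 18\det W^+,
\]
and the refined Kato inequality $|\nabla W^+|^2 \geq \tfrac{5}{3}|\nabla |W^+||^2$, valid for harmonic sections of the bundle $\End_0(\Lambda^+)$ of traceless self-adjoint endomorphisms. Combining these and setting $v := |W^+|^{1/3}$ --- the power chosen precisely so as to annihilate the cross-term in $|\nabla |W^+||^2$ --- produces the elliptic inequality
\[
\Delta v \geq -\tfrac{s}{6}\,v + \tfrac{6\det W^+}{|W^+|^{5/3}}.
\]
This is the analytic framework already deployed in \cite{lebcake}.

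The next step is to convert this pointwise inequality, together with the hypothesis $\det W^+ + C|W^+|^3 \geq 0$ (where $C = \tfrac{5\sqrt{2}}{21\sqrt{21}}$), into a global contradiction with the assumption that $\det W^+ \leq 0$ somewhere. My plan is to test the inequality against a judicious power $v^{a}$, integrate, and use the integrated Weitzenb\"ock identity (obtained by integrating the $\Delta|W^+|^2$-identity against $1$) to eliminate the $\int s|W^+|^p$-term. The outcome will be an integrated inequality of the form
\[
\int_M \bigl(\det W^+ + C|W^+|^3\bigr)\,\Psi\,dV \leq 0
\]
for a strictly positive weight $\Psi$. Combined with the pointwise hypothesis, this forces $\det W^+ \equiv -C|W^+|^3$ on $M$; equality then saturates both the refined Kato inequality and an accompanying algebraic inequality on $\End_0(\Lambda^+)$, forcing $W^+ \equiv 0$ and contradicting the hypothesis $W^+ \neq 0$.

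The main obstacle is identifying the correct exponent $a$ so that the sharp constant $\tfrac{5\sqrt{2}}{21\sqrt{21}}$ emerges. This constant is the precise borderline value at which the interplay of the Kato constant $\tfrac{5}{3}$, the Weitzenb\"ock coefficient $18$, and a sharp algebraic inequality on $\End_0(\Lambda^+)$ allows the argument to close. The equality-case analysis that rigidifies $W^+$ is more routine by comparison.
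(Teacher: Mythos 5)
Your overall reduction is right: once $\det(W^+)>0$ is established everywhere, Theorem \ref{whoa} (together with the double-cover discussion) and Theorem \ref{whoo} finish the job, exactly as in the paper. The problem is the mechanism you propose for establishing $\det(W^+)>0$. Your plan culminates in an integral inequality $\int_M\bigl(\det W^+ + C|W^+|^3\bigr)\Psi\,d\mu\le 0$ with $\Psi>0$, derived from $\delta W^+=0$, compactness, and the Bochner--Kato machinery; combined with the pointwise hypothesis this would force $\det W^+\equiv -C|W^+|^3$ and then $W^+\equiv 0$. No such inequality can hold: for any K\"ahler metric of positive scalar curvature, or its Derdzi\'{n}ski rescaling $h=s^{-2}g$ (which satisfies $\delta W^+=0$), one has $\det W^+=s^3/864>0$, so the integrand is strictly positive and the integral is strictly positive. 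Since these are precisely the metrics the theorem characterizes, your argument, if it closed, would show that the hypotheses of Theorem \ref{cadenza} are never satisfiable --- it proves too much. The assumption that $\det W^+\le 0$ somewhere is a statement at a single point and cannot feed into the derivation of a global integral inequality in the way your outline requires, and nothing in the proposal localizes the argument there. A secondary issue: integrating the $\Delta|W^+|^2$ identity against $1$ only controls $\int s|W^+|^2$, so the promised cancellation of the $\int s|W^+|^{p}$ term forces $p=2$ and removes your freedom in choosing the exponent $a$.

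The deeper reason the scalar route fails is that $|W^+|$ and $\det W^+$ cannot see the information the theorem actually turns on, namely that the top eigenvalue $\alpha$ of $W^+$ has multiplicity one and that its eigen-line becomes parallel after the conformal rescaling $g=\alpha_h^{2/3}h$. The paper's proof is directional: Lemma \ref{glissando} shows that the hypothesis $\det(W^+)\ge -\tfrac{5\sqrt{2}}{21\sqrt{21}}|W^+|^3$ is exactly the eigenvalue condition $\beta\le\alpha/4$ --- so the sharp constant is the value of $-\det W^+/|W^+|^3$ at $\beta/\alpha=1/4$, not an artifact of the Kato constant $5/3$; Lemma \ref{arpeggio} then gives $W^+(\nabla_e\omega,\nabla^e\omega)\le\beta|\nabla\omega|^2$ for the normalized top eigenform $\omega$, and pairing the Weitzenb\"ock identity \eqref{initio} for $fW^+$ against $\omega\otimes\omega$ and integrating yields $0\ge 3\int|d\omega|^2$. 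This makes $(M,g,\omega)$ almost-K\"ahler, and only at that stage does the scalar Bochner inequality you quote enter, via \cite[Proposition 2]{lebcake}, to upgrade almost-K\"ahler to K\"ahler with $s>0$; the conclusion $\det W^+>0$ is then read off from $\det W^+=s^3/864$. So the refined-Kato framework you cite genuinely appears, but only downstream of the eigenform argument, which your proposal omits entirely.
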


\section{The Proofs of Theorems \ref{whoo} and \ref{whoa}}
\label{gist}

Let  $(M,h)$ be a compact oriented Riemannian  $4$-manifold with
  $\det (W^+) > 0$ everywhere. 
   Since $W^+: \Lambda^+\to \Lambda^+$ is  self-adjoint, we can diagonalize  $W^+$ at any point of $M$ as 
$$W^+ = \left[\begin{array}{ccc}\alpha &  &  \\ &  \beta&  \\ &  & \gamma\end{array}\right], $$
by choosing a suitable orthonormal basis  for $\Lambda^+$; and, after re-ordering our basis if necessary, we may arrange that 
$\alpha \geq \beta \geq \gamma$ at our given point. 
However,  by its very definition, 
   the self-dual Weyl curvature $W^+: \Lambda^+\to \Lambda^+$ 
  automatically satisfies  $\trace (W^+) =0$, and this of course means that 
$$\alpha  +  \beta + \gamma =0.$$
It  thus follows that  $\alpha > 0$ and  $\gamma < 0$ as long as   $W^+\neq 0$ at the point in question. 
We  therefore  immediately see   that   $\det W^+ =  \alpha \beta  \gamma$ always  has the same sign as {\em minus} the 
middle eigenvalue $\beta$. 
Consequently, our assumption that  $\det W^+ > 0$ 
is equivalent  to saying that  {\sf exactly one} of the eigenvalues, namely $\alpha$, is positive 
at each point, while both the other two  are  negative. In particular, the positive eigenvalue $\alpha$ 
always has multiplicity one, 
so that $\alpha : M\to \RR$ is always  the unique positive solution of the  characteristic equation $\det (W^+-\alpha I) =0$, 
and so is  a smooth positive function on $M$.  
 Since the   $\alpha$-eigenspace of $W^+$ is exactly the kernel of $(W^+ - \alpha I) : \Lambda^+\to \Lambda^+$, this eigenspace moreover varies smoothly from point to point. Thus,  our 
 assumption  that $\det W^+ > 0$ 
implies  that  the unique  positive eigenspace of $W^+$  defines a smooth real  line sub-bundle
$L\subset \Lambda^+$. Up to bundle isomorphism, it follows  that $L$ is intrinsically  classified  by 
$$w_1(L) \in H^1 (M, \ZZ_2 ) = \Hom (\pi_1(M), \ZZ_2),$$
and so will necessarily be trivial if $M$ is simply-connected --- or, indeed,  if
$\pi_1(M)$ merely does not contain a subgroup of index $2$.

Since  the  condition $\det (W^+) > 0$ is  conformally invariant,  the above discussion  similarly applies to any  
metric $g= f^{-2} h$ arising by conformal rescaling $h$, using    a smooth positive function  $f: M\to \RR^+$. 
  On the other hand, the endomorphism $W^+: \Lambda^+ \to \Lambda^+$ is explicitly 
  given by 
  $$\varphi_{ab}\longmapsto [W^+(\varphi)]_{cd} := \frac{1}{2} {W^{+ab}}_{cd}~\varphi_{ab},$$
  so constructing it out of the conformal-weight-zero tensor field ${W^{+a}}_{bcd}$
  involves raising an index. Thus, replacing $h$ with $g=f^{-2}h$ rescales the top eigenvalue 
  by a factor of $f^2$:
  $$\alpha_g = f^{2}\alpha_h.$$
  We will henceforth impose the interesting  choice  
  \begin{equation}
\label{shrewd}
\boxed{f= \alpha_h^{-1/3}}
\end{equation}
  of the conformal factor $f$, because this then has the 
  nice property that 
  $$ \alpha_g = f^2\alpha_h= \alpha_h^{1/3}= f^{-1}.$$
  It then follows  that $\alpha := \alpha_g$  satisfies 
  \begin{equation}
\label{cunning}
\alpha f \equiv 1
\end{equation}
for this  preferred  conformal rescaling  $g=f^{-2}h$ of the original metric.

With respect to this conformally  altered metric $g$, there exist, at each point, exactly two self-dual 
$2$-forms $\omega$ which satisfy 
\begin{equation}
\label{charlie}
W^+_g(\omega) = \alpha_g ~\omega , \qquad |\omega|_g^2 = 2.
\end{equation}
Since these both  belong to the real line bundle $L\subset \Lambda^+$, 
and differ by sign, we 
 can  find a global   self-dual $2$-form $\omega$ on $M$ satisfying these requirements
 everywhere if and only if  $L$ is trivial, which  happens  precisely  in the case where  $w_1(L)= 0$. On the other hand, if this class
is non-zero, we can then just  pass to the double cover $\varpi: \hat{M}\to M$ given by the
elements of norm $\sqrt{2}$ in $L$, and we then instead  obtain a tautological global self-dual $2$-form on $\hat{M}$ 
satisfying \eqref{charlie} with respect to the pulled-back metric $\hat{g}= \varpi^*g$. In this case, notice that
the connected Riemannian manifold $(\hat{M},\hat{g})$ admits an isometric involution $\sigma : \hat{M}\to \hat{M}$
induced by scalar multiplication by $-1$ in  $L$, and that this involution  satisfies $\sigma^*\omega = -\omega$ by construction.

Our stipulation  that $|\omega|_g^2=2$ has been imposed so that $\omega$ 
can be put in  the point-wise  normal form 
$$\omega = e^1\wedge e^2 + e^3 \wedge e^4$$
by choosing  an appropriate oriented orthonormal frame at any given point. 
Thus,    whether on $M$ or on $\hat{M}$,  our  global $2$-form $\omega$ 
 will give rise to  a unique orientation-compatible 
almost-complex structure $J$ defined by
$$\omega= g(J\cdot , \cdot ).$$
In other words,  the tensor-field $J$  explicitly obtained from $\omega$ by index-raising
$${J_a}^b= \omega_{ac}g^{cb}$$
with respect to $g$ will then automatically 
satisfy 
$${J_a}^b{J_b}^c = - \delta_a^c,$$
thus making it  a $g$-compatible almost-complex structure on $M$ or   $\hat{M}$.

Our main argument will hinge on a few simple facts about self-dual $2$-forms and the Weyl curvature, starting with the following:
\begin{lem} 
\label{bliss}
Let $(M,h)$ be an oriented  Riemannian $4$-manifold for which  $\det (W^+)> 0$ everywhere.   Also suppose that the top eigenspace $L\subset \Lambda^+$ of
$W^+$ is trivial as  a real line bundle $L\to M$. Let $g=f^{-2}h$ be some conformal rescaling of $h$,   and  let $\omega$ then  be a self-dual $2$-form 
on $M$  that  satisfies \eqref{charlie} everywhere. 
Then 
\begin{equation}
\label{negatron} 
  W^+ ( \nabla^a \omega , \nabla_a\omega) \leq 0 ,
\end{equation}
everywhere, where all terms are to be computed  with respect to $g$. 
\end{lem}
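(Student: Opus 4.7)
The proof should be almost entirely pointwise linear algebra, exploiting two conservation-type facts about the self-dual form $\omega$.

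My plan is to first observe that, pointwise, the covariant derivative $\nabla_{a}\omega$ lands in the orthogonal complement of $\omega$ inside $\Lambda^{+}$. This rests on two ingredients. The length constraint $|\omega|^{2}_{g}=2$ is constant on $M$, so differentiating yields $\langle \omega, \nabla_{a}\omega\rangle = 0$ at every point and for every index $a$. Moreover, because the Hodge star operator of $g$ is parallel, $\ast \nabla_{a}\omega = \nabla_{a}(\ast \omega) = \nabla_{a}\omega$, so $\nabla_{a}\omega$ is itself a self-dual $2$-form. Combining these, $\nabla_{a}\omega$ is a section of the rank-two sub-bundle $L^{\perp}\subset \Lambda^{+}$.

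Second, I would combine the hypothesis $\det(W^{+})>0$ with the trace-free condition $\alpha+\beta+\gamma=0$ as recorded in the paper just before the lemma. These force $\alpha>0$ to be simple and $\beta,\gamma<0$; thus $W^{+}$ restricted to $L^{\perp}$ is a negative-definite self-adjoint endomorphism. In particular, the quadratic form $v\mapsto W^{+}(v,v)$ is negative semi-definite on $L^{\perp}\subset \Lambda^{+}$.

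Putting these together: pick a local orthonormal frame $\{e_{a}\}$ for the tangent bundle with respect to $g$, so that
\[
W^{+}(\nabla^{a}\omega,\nabla_{a}\omega) \;=\; \sum_{a} W^{+}\bigl(\nabla_{e_{a}}\omega,\,\nabla_{e_{a}}\omega\bigr).
\]
Each summand is the value of the negative semi-definite form $v\mapsto W^{+}(v,v)$ applied to a section of $L^{\perp}$, hence is non-positive. Summing yields the inequality \eqref{negatron}.

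There is essentially no obstacle here: the only subtlety is keeping the conformal rescaling straight, but since $L$, the self-duality decomposition, and the parallelism of $\ast$ are all intrinsic to the metric $g$ chosen, no rescaling gymnastics are needed once one works consistently with $g$. The triviality of $L$ is only used to guarantee the existence of the global section $\omega$; the pointwise inequality itself would hold equally well on any open set where $\omega$ is defined.
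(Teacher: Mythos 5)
Your argument is correct and is essentially the same as the paper's: both rest on the observation that $\nabla\omega$ takes values in $\omega^{\perp}\subset\Lambda^{+}$ because $\omega$ has constant norm, together with the fact that $W^{+}$ is negative (semi-)definite on $\omega^{\perp}=L^{\perp}$ since $\det(W^{+})>0$ forces the simple positive eigenvalue to be the one with eigenspace $L=\mathbb{R}\omega$. Your write-up merely spells out the details (self-duality of $\nabla_{a}\omega$ via the parallelism of $\ast$) that the paper leaves implicit.
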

\begin{proof} The covariant derivative   $\nabla \omega$ of $\omega$ belongs to $\Lambda^1 \otimes \omega^\perp \subset\Lambda^1 \otimes  \Lambda^+$ because
$\omega$ has constant norm with respect to $g$. The result therefore follows from the fact that $W^+(\phi , \phi) \leq 0$ for any 
$\phi \in \omega^\perp \subset \Lambda^+$. 
\end{proof} 
Secondly, we will need the following standard algebraic observation:
\begin{lem} At any point $p$ of  an oriented $4$-manfold $(M,g)$, 
\begin{equation}
\label{megatron} 
|W^+|^2 \geq \frac{3}{2} \alpha^2
\end{equation}
where $\alpha = \alpha_g$ is the  the top eigenvalue of $W^+_g$ at $p$.
\end{lem}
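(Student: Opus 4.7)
The inequality is purely pointwise and algebraic, so the plan is to reduce it to a statement about the three eigenvalues of $W^+$ at the chosen point $p$. First I would diagonalize $W^+ : \Lambda^+ \to \Lambda^+$ in some orthonormal basis of $\Lambda^+_p$, writing its eigenvalues as $\alpha \geq \beta \geq \gamma$. Because $W^+$ is trace-free, the three eigenvalues satisfy the linear constraint $\alpha + \beta + \gamma = 0$, and, with the Frobenius convention used in the paper, the pointwise norm of the self-adjoint endomorphism $W^+$ is just
$$|W^+|^2 \; = \; \alpha^2 + \beta^2 + \gamma^2.$$

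The key observation is then the elementary inequality $(\beta - \gamma)^2 \geq 0$, which rearranges to
$$\beta^2 + \gamma^2 \;\geq\; \tfrac{1}{2}(\beta + \gamma)^2.$$
Substituting the trace-free relation $\beta + \gamma = -\alpha$ yields $\beta^2 + \gamma^2 \geq \tfrac{1}{2}\alpha^2$, and hence
$$|W^+|^2 \;=\; \alpha^2 + (\beta^2 + \gamma^2) \;\geq\; \alpha^2 + \tfrac{1}{2}\alpha^2 \;=\; \tfrac{3}{2}\alpha^2,$$
which is the desired estimate. (Alternatively, one could minimize $\alpha^2 + \beta^2 + \gamma^2$ subject to $\alpha$ fixed and $\beta + \gamma = -\alpha$ using one Lagrange multiplier; the unique critical point is $\beta = \gamma = -\alpha/2$, giving the same bound.)

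There is really no obstacle here; the statement is a one-line consequence of the trace-free condition once one passes to eigenvalues, and the only thing worth recording is the equality case, namely $\beta = \gamma = -\alpha/2$, i.e.\ the case in which the two smaller eigenvalues of $W^+$ coincide. This will plausibly be important later as the rigidity case when one wants to detect the conformally K\"ahler Derdzi\'nski situation.
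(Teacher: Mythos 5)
Your proof is correct and is essentially the same as the paper's: both reduce to the eigenvalues $\alpha\geq\beta\geq\gamma$ with $\alpha+\beta+\gamma=0$ and then apply a one-line algebraic inequality (the paper completes the square as $|W^+|^2=\tfrac{3}{2}\alpha^2+2(\beta+\tfrac{1}{2}\alpha)^2$, which is the same computation as your $(\beta-\gamma)^2\geq 0$). Your observation that equality forces $\beta=\gamma=-\alpha/2$ is indeed the eigenvalue pattern $(s/6,-s/12,-s/12)$ of the K\"ahler case discussed later in the paper.
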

\begin{proof}
Because $\trace W^+=0$, 
$$
|W^+|^2 = \alpha^2 + \beta^2 + (-\alpha - \beta)^2 = \frac{3}{2}\alpha^2 +2 (\beta + \frac{1}{2} \alpha)^2  \geq \frac{3}{2}\alpha^2
$$
where $\beta$ is  the middle  eigenvalue of $W^+_g$ at $p$.
\end{proof} 
 Finally, 
 we remind the reader of the Weitzenb\"ock formula 
 \begin{equation}
\label{harmless} 
(d+d^*)^2 \omega = \nabla^*\nabla \omega - 2W^+(\omega) + \frac{s}{3}\omega
\end{equation}
for the Hodge Laplacian on   self-dual $2$-forms.

We are now finally ready to see what all this means when $h$ is an Einstein metric. But   our discussion will actually pertain  to  
the much larger class of 
oriented $4$-manifolds $(M,h)$ which have 
 {\em harmonic self-dual Weyl curvature}, in the sense that 
 \begin{equation}
\label{hsdw}
\delta W^+:= -\nabla \cdot W^+=0.
\end{equation}
 When $h$ is Einstein, \eqref{hsdw} holds  as a consequence of the second Bianchi identity; 
 but the reader should keep in mind that 
 \eqref{hsdw}  is actually much  weaker than the Einstein condition. The reason    \eqref{hsdw} will be so useful for our purposes is that it displays 
 a weighted conformal invariance \cite{pr2} under conformal changes of metric. 
 Namely,  if $h$ satisfies  $\delta W^+=0$, then any conformal rescaling $g=f^{-2}h$ will instead have the property that $\delta (fW^+)=0$. 
This then implies the useful   Weitzenb\"ock formula
\begin{equation}
\label{initio}
0 = \nabla^*\nabla (fW^+)+ \frac{s}{2} fW^+ - 6 fW^+\circ W^+ + 2 f|W^+|^2 I 
\end{equation}
for $fW^+$ with respect to $g$. For other applications of this fact, see  \cite{derd,G1,lebcake}.

\begin{thm} 
\label{summit}
Let $(M,h)$ be a compact oriented Riemannian manifold with $\delta W^+=0$ and $\det (W^+) > 0$. 
Also suppose that the positive eigenspace $L\subset \Lambda^+$ of $W^+$ is trivial as a  real line bundle $L\to M$. Then the 
conformally  rescaled metric  $g=f^{-2}h$  defined by \eqref{shrewd} is an 
 orientation-compatible  K\"ahler metric on $M$.
\end{thm}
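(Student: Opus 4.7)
The plan is to show that the preferred conformal rescaling $g = f^{-2}h$ satisfies $\nabla\omega = 0$, where $\omega$ is the global self-dual $2$-form produced by the triviality of $L$. Because $\omega = g(J\cdot,\cdot)$ for an orientation-compatible almost-complex structure $J$, parallelism of $\omega$ will immediately give $\nabla J = 0$, so $g$ will be K\"ahler. The approach will combine two Weitzenb\"ock identities --- the conformally twisted one \eqref{initio} for the endomorphism $\phi := fW^+$, and the standard one \eqref{harmless} for $\omega$ itself --- to produce a pair of integrated inequalities that squeeze to equalities, forcing $|\nabla\omega|^2 \equiv 0$.

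The central algebraic fact to exploit is that the choice $f = \alpha_h^{-1/3}$ was made precisely so that $\alpha_g f = 1$, making $\phi(\omega) = \omega$ pointwise with respect to $g$. Evaluating \eqref{initio} as an endomorphism identity on $\omega$ and then pairing with $\omega$, the algebraic pieces will reduce immediately (using $\phi\omega = \omega$, $\phi\circ W^+(\omega) = \alpha\omega$, $|\omega|^2=2$); the derivative term $\langle \nabla^*\nabla\phi, \omega\otimes\omega\rangle$ requires a Leibniz expansion starting from $\nabla^*\nabla\langle\phi, \omega\otimes\omega\rangle = 0$ (since this inner product is constantly $2$), together with the first-order relation $(\nabla\phi)\omega = \nabla\omega - \phi(\nabla\omega)$ obtained by differentiating $\phi\omega = \omega$. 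After several self-adjointness-driven cancellations, this should yield the pointwise identity
\[
|\nabla\omega|^2 - fW^+(\nabla^a\omega, \nabla_a\omega) = 6\alpha - \frac{s}{2} - 2f|W^+|^2, \qquad (\star)
\]
where $\alpha = \alpha_g$ and $s = s_g$. Separately, pairing \eqref{harmless} with $\omega$ and using constancy of $|\omega|^2$ gives
\[
\langle (d+d^*)^2\omega, \omega\rangle = |\nabla\omega|^2 - 4\alpha + \frac{2s}{3}. \qquad (\star\star)
\]

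I will then integrate $(\star)$ and invoke Lemma \ref{bliss} (which gives $fW^+(\nabla^a\omega,\nabla_a\omega) \leq 0$) together with the inequality \eqref{megatron} (which, via $f\alpha = 1$, gives $2f|W^+|^2 \geq 3\alpha$) to obtain $0 \leq \int |\nabla\omega|^2 \, dV_g \leq \int (3\alpha - s/2)\, dV_g$, so that $\int s \leq 6\int \alpha$. Substituting $(\star)$ into the integrated $(\star\star)$ and re-applying the same two bounds will give $0 \leq \int (|d\omega|^2 + |d^*\omega|^2) \, dV_g \leq \int (s/6 - \alpha)\, dV_g$, hence $\int s \geq 6\int\alpha$. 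The squeeze forces $\int s = 6\int\alpha$ and collapses every inequality along the way to a pointwise equality. In particular, equality in \eqref{megatron} forces the two non-top eigenvalues of $W^+$ to coincide as $\beta = \gamma = -\alpha/2$, so $W^+$ acts on $\omega^\perp \subset \Lambda^+$ as $-\alpha/2$ times the identity. Since $\nabla\omega$ takes values in $\omega^\perp$ (because $|\omega|^2$ is constant), this reduces $fW^+(\nabla^a\omega, \nabla_a\omega)$ to $-\tfrac12|\nabla\omega|^2$, and pointwise equality in Lemma \ref{bliss} then delivers $|\nabla\omega|^2 \equiv 0$.

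The hard part will be the derivation of $(\star)$: keeping track of $\langle\nabla^*\nabla\phi, \omega\otimes\omega\rangle$ under a Leibniz expansion requires juggling the self-adjointness of both $\phi$ and $\nabla_i\phi$, together with the first-order relation $(\nabla\phi)\omega = \nabla\omega - \phi(\nabla\omega)$, and a non-trivial cancellation that produces the clean final form. Once $(\star)$ is in hand, the rest is a clean integration-and-squeeze argument in the spirit of Derdzi\'{n}ski.
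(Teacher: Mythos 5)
Your proposal is correct and follows essentially the same route as the paper: pair the Weitzenb\"ock identity \eqref{initio} for $fW^+$ with $\omega\otimes\omega$, exploit the normalization $\alpha f\equiv 1$ together with \eqref{negatron}, \eqref{megatron}, and \eqref{harmless}, and conclude $\nabla\omega\equiv 0$. The only difference is organizational --- you localize to the pointwise identity $(\star)$ via the constancy of $\langle fW^+,\omega\otimes\omega\rangle$ and then run a two-sided integral squeeze, whereas the paper performs a single global integration by parts and one chain of inequalities; your $(\star)$ and $(\star\star)$ both check out, and they integrate to exactly the paper's identity (note also that your closing detour through equality in \eqref{megatron} is unnecessary, since the squeeze already forces $\int_M|\nabla\omega|^2\,d\mu\le\int_M(3\alpha-\tfrac{s}{2})\,d\mu=0$ directly).
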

\begin{proof} Since $L\to M$ is trivial, we can choose a  global self-dual $2$-form $\omega$ on $M$ which satisfies \eqref{charlie} at every point. 
Always working  henceforth with respect to $g$,  we now  take
 the inner product of \eqref{initio} with 
$\omega\otimes \omega$, and  then integrate on $M$. Integrating by parts, and  using 
\eqref{cunning},  \eqref{negatron},  \eqref{megatron}, and \eqref{harmless},  we then have 
\begin{eqnarray*} 0&=& 
\int_M \Big\langle\Big( \nabla^*\nabla fW^+ + \frac{s}{2} fW^+ - 6 fW^+\circ W^+ + 2 f|W^+|^2 I \Big), \omega \otimes
\omega \Big\rangle ~d\mu_g \\
&=& \int_M \Big[ \langle W^+ , \nabla^*\nabla (\omega\otimes \omega )\rangle  + \frac{s}{2} W^+(\omega , \omega ) 
 - 6 |W^+(\omega)|^2+ 2 |W^+|^2 |\omega |^2 \Big] f~ d\mu_g  \\
&=&  
\int_M \Big[ 
- 2W^+(\nabla_e \omega , \nabla^e \omega )-2W^+(\omega , \nabla^e\nabla_e \omega  ) 
\\&& \hspace{2.2in}   + \frac{s}{2} \alpha |\omega|^2 
 - 6 \alpha^2 |\omega|^2+ 2 |W^+|^2 |\omega |^2 \Big] f~ d\mu_g  \\
 &\geq &  
\int_M \Big[ 
-2\alpha \langle \omega , \nabla^e\nabla_e \omega  \rangle  
+ \frac{s}{2} \alpha |\omega|^2 
 - 6 \alpha^2 |\omega|^2+ 3 \alpha^2 |\omega |^2 \Big] f~ d\mu_g  \\
  &=&  
\int_M \Big[ 
2\langle \omega , \nabla^*\nabla \omega  \rangle  
+ \frac{s}{2}  |\omega|^2 
 - 3 \alpha |\omega|^2 \Big] ~(\alpha f)~ d\mu_g  \\
 &=&  
\int_M \Big[ 
\frac{1}{2}\langle \omega , \nabla^*\nabla \omega  \rangle      + \frac{3}{2}\langle \omega , \nabla^*\nabla \omega  \rangle 
+ \frac{s}{2}  |\omega|^2 
 - 3 W^+(\omega , \omega) \Big] ~ d\mu_g  \\ 
 &=&  \frac{1}{2} \int_M |\nabla\omega |^2 ~ d\mu_g  + \frac{3}{2} 
\int_M \Big\langle \omega , \nabla^*\nabla \omega
- 2 W^+(\omega) + \frac{s}{3}\omega 
 \Big\rangle 
 ~ d\mu_g  \\ 
 &=& 
 \frac{1}{2} \int_M |\nabla\omega |^2 ~ d\mu_g  +
 \frac{3}{2}  \int_M 
\Big\langle \omega , (d+d^*)^2 \omega \Big\rangle 
~ d\mu_g  \\ 
 &=& 
 \frac{1}{2} \int_M |\nabla\omega |^2 ~ d\mu  +
 3 \int_M |d\omega|^2 
 ~d\mu  \\ 
 &\geq& 
 \frac{1}{2} \int_M |\nabla\omega |^2 ~ d\mu  , 
 \end{eqnarray*}
 This shows that  $\nabla \omega \equiv 0$ with respect to our rescaled metric $g$. Since it   of course also follows  that $\nabla J\equiv 0$, 
we now see that  $(M,g,J)$ is actually   a K\"ahler manifold, with K\"ahler form $\omega$. In particular, this shows that 
  the initial metric $h=f^2 g$ is  
    conformally K\"ahler. 
    \end{proof} 

On the other hand,  because the curvature tensor of a K\"ahler surface $(M^4,g,J)$  belongs to $\odot^2 \Lambda^{1,1}$, 
the fact that 
\begin{equation}
\label{decore}
\Lambda^+= \RR \omega \oplus \Re e~\Lambda^{2,0}
\end{equation}
 for $(M,g)$  implies that that its 
 self-dual Weyl curvature 
 takes the form 
$$W^+ = \left[\begin{array}{ccc}s/6 &  &  \\ & -s/12 &  \\ &  & -s/12\end{array}\right]$$
in an orthonormal  basis adapted to \eqref{decore}. In particular, $\det (W^+) = s^3/3^32^5$, so 
 a K\"ahler metric $g$ has $\det (W^+)> 0$ if and only if its scalar curvature $s$ is positive. It follows that any K\"ahler metric
conformal to a Riemannian metric  $h$ with  $\det W^+ > 0$ must necessarily have $s> 0$. Moreover, we now see that  the top eigenvalue $\alpha$ of
$W^+$ for any such metric $g$ is given by $s/6$. Thus,  when $h$  satisfies \eqref{hsdw}, we have succeeded in expressing it  as 
$h=\alpha^{-2}g= 36s^{-2}g$ for a K\"ahler metric $g$ of positive scalar curvature $s$. 
However, when this happens,   $\tilde{g}= 6^{2/3}g$ is  also a K\"ahler metric, and has   scalar curvature $\tilde{s}=6^{-2/3}s$, and we 
therefore  also  have  $h=\tilde{s}^{-2}\tilde{g}$ for a K\"ahler metric $\tilde{g}$ with positive scalar curvature $\tilde{s}$. 
This was the form preferred by Derdzi\'{n}ski \cite{bes,derd}, who discovered that, conversely, 
 any K\"ahler surface $(M^4,g,J)$ of  scalar curvature $s> 0$ 
gives rise to a Riemannian metric $h$ on $M$ with $\delta W^+=0$ and $\det W^+ > 0$ via the ansatz $h= s^{-2}g$.

On the other hand,  any compact K\"ahler surface $(M^4,g,J)$ of positive scalar curvature has  geometric genus 
$h^{2,0}=0$ by an argument due to  Yau \cite{yauruled}. But since $b_+(M) = 1 + 2h^{2,0}$ for any 
compact K\"ahler surface \cite{bpv}, this is equivalent to saying  $b_+(M) = 1$. Geometrically, 
this means that a self-dual $2$-form on $(M,g)$  
is harmonic if and only if it is a constant multiple of the K\"ahler form $\omega$. Of course, since the space of 
self-dual harmonic $2$-forms is conformally invariant in dimension $4$,  we also see that the 
self-dual harmonic $2$-forms on $(M,h)$ likewise consists of the constant multiples of $\omega$. 

This now allows us 
 to back-track a little, and  finally deal with  the case where  the the real-line bundle $L\to M$
is non-trivial. In this setting, the conformal factor defined by \eqref{shrewd} still defines a metric $g$ on $M$, 
but it is only when we pull it back to $\varpi : \hat{M}\to M$ that this rescaled  metric can be associated with a global
self-dual $2$-form 
$\omega$ satisfying  $W^+ (\omega ) =\alpha \omega$ and $|\omega|_{\varpi*g}= \sqrt{2}$. But now we 
can  just apply Theorem \ref{summit} to  $(\hat{M},\varpi^*g)$, thereby showing that it
is a K\"ahler manifold with K\"ahler form $\omega$ and positive scalar curvature. In particular, this implies
that $b_+(\hat{M}) =1$. Thus,  any  self-dual  harmonic $2$-form on 
$(\hat{M}, \varpi^*g)$ is a constant multiple of the K\"ahler form $\omega$. 
However, by construction, there is an  involution $\sigma:  \hat{M}\to  \hat{M}$ 
with $\varpi\circ \sigma = \varpi$ and $\sigma^* \omega= -\omega$. It follows that 
$b_+(M)= 0$, since a non-trivial  self-dual harmonic form on $(M,g)$ would otherwise 
pull back to a $\sigma$-invariant self-dual harmonic form on $(\hat{M}, \varpi^*g )$; and this 
 is impossible, because  any such  form would  also have to be a constant multiple of  $\omega$, which is not $\sigma$-invariant. 
We have thus  proved the following: 

\begin{prop} 
\label{mule}
Let $(M,h)$ be a compact  oriented Riemannian $4$-manifold with 
$\delta W^+ =0$ and $\det (W^+) > 0$. Then either
\begin{enumerate}[{\rm (i)}]
\item $b_+(M)=1$, and there is an orientation-compatible K\"ahler metric $g$ on $M$ of scalar curvature $s>0$, 
such that $h=s^{-2}g$; or else
\item $b_+(M)=0$, and there is a conformal rescaling $g$ of $h$ whose pull-back $\varpi^*g$  to a suitable double cover $\varpi: \hat{M}\to M$ 
 is a positive-scalar curvature K\"ahler metric on $\hat{M}$ that  is related to 
$\varpi^*h$  as in case {\rm (i)}. 
\end{enumerate}
\end{prop}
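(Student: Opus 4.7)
The plan is to dichotomize on whether the real line bundle $L \subset \Lambda^+$ spanned by the top eigenvector of $W^+$ is topologically trivial, i.e., on whether $w_1(L) \in H^1(M, \mathbb{Z}_2)$ vanishes; these two possibilities will correspond exactly to the two alternatives in the proposition. In the trivial case, the plan is to apply Theorem \ref{summit} directly, which gives that the canonical conformal rescaling $g = f^{-2}h$ with $f = \alpha_h^{-1/3}$ is K\"ahler, with K\"ahler form $\omega$ belonging to $L$. Derdzi\'nski's diagonal form for $W^+$ on a K\"ahler surface then identifies the top eigenvalue as $\alpha_g = s/6$, so positivity of $\det(W^+)$ forces $s > 0$; combining this with the normalization $\alpha_g f \equiv 1$ yields $h = 36 s^{-2} g$, and the harmless constant rescaling $\tilde g = 6^{2/3} g$ produces $h = \tilde s^{-2} \tilde g$ with $\tilde s > 0$. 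To finish this case, I would then invoke Yau's theorem that $h^{2,0} = 0$ on any compact K\"ahler surface of positive scalar curvature, whence $b_+(M) = 1 + 2h^{2,0} = 1$.

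In the nontrivial case, I would pass to the double cover $\varpi : \hat{M} \to M$ whose fiber above each $p \in M$ consists of the two unit vectors in $L_p$ for some chosen fiber metric. On $\hat{M}$ the pullback $\varpi^* L$ is tautologically trivialized, the deck involution $\sigma$ acts as $-1$ on the fibers of $L$, and the lifted metric $\varpi^* h$ still satisfies $\delta W^+ = 0$ and $\det(W^+) > 0$ since $\varpi$ is a local isometry. Case (i) then applies to $(\hat{M}, \varpi^* h)$ and produces a K\"ahler form $\omega$ on the cover satisfying $\sigma^* \omega = -\omega$ by construction, together with $b_+(\hat{M}) = 1$. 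Every self-dual harmonic $2$-form on $\hat{M}$ is therefore a scalar multiple of $\omega$; since pullback identifies $H^2(M, \mathbb{R})$ with the $\sigma$-invariant part of $H^2(\hat{M}, \mathbb{R})$ and no nonzero multiple of $\omega$ is $\sigma$-invariant, and the space of self-dual harmonic forms is conformally invariant in dimension four, I would conclude that $b_+(M) = 0$.

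The main obstacle I anticipate is not in any of the analytic arguments, which were already done in Theorem \ref{summit}, but in the bookkeeping around the double cover in case (ii): namely, verifying that $\varpi^* L$ is genuinely trivialized, that the K\"ahler form produced by Theorem \ref{summit} on $\hat{M}$ is indeed $\sigma$-antiinvariant, and that the Hodge-theoretic identification of the self-dual harmonic forms on $(M,g)$ with the $\sigma$-invariants of those on $(\hat{M}, \varpi^* g)$ correctly uses conformal invariance of $\mathcal{H}^+$ in dimension four. Once these compatibilities are nailed down, the two Betti number statements follow immediately, and the conformal formula $h = s^{-2} g$ of case (i) descends, or lifts, to both cases.
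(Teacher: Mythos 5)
Your proposal is correct and follows essentially the same route as the paper: dichotomize on the triviality of $L$ via $w_1(L)$, apply Theorem \ref{summit} together with Derdzi\'nski's diagonal form for $W^+$ and Yau's vanishing of $h^{2,0}$ in the trivial case, and in the non-trivial case pass to the double cover, note $\sigma^*\omega=-\omega$, and use the fact that every self-dual harmonic form upstairs is a multiple of $\omega$ to force $b_+(M)=0$. The bookkeeping points you flag (tautological trivialization of $\varpi^*L$, anti-invariance of $\omega$, conformal invariance of the space of self-dual harmonic forms) are exactly the ones the paper checks, and they go through as you expect.
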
 

Theorem \ref{whoa} is now  an immediate consequence of   Proposition \ref{mule}.

Notice that the  conformally rescaled metric $g$ is globally well-defined on $M$
in both cases of Propostion \ref{mule}; moreover, it  has 
scalar curvature $s> 0$, and may be renormalized   so as to arrange that $h=s^{-2}g$. The distinction between the two 
cases  is really a matter of holonomy; in the first case, the holonomy of $g$ is a subgroup of $\mathbf{U(2)}$, 
while in the second case it  instead   belongs to  the  larger  group $\mathbf{U(2)}\rtimes \ZZ_2$ of real-linear transformations of $\CC^2$
generated by  complex conjugation $(z^1,z^2)\mapsto (\bar{z}^1,\bar{z}^2)$ and   the unitary  transformations. Of course, 
the natural representation $\mathbf{U(2)}\rtimes \ZZ_2\to \ZZ_2$ gives rise to a double cover $\hat{M}\to M$,  and passing to this cover
then  simplifies matters  by reducing to the case of $\mathbf{U}(2)$ holonomy.  

 Using \cite{lebhem}  and the simple-connectivity of del Pezzos, we also now have: 
\begin{prop} 
\label{rule}
Let $(M,h)$ be a compact  oriented Riemannian Einstein $4$-manifold with 
$\det (W^+) > 0$. Then $(M,h)$ actually satisfies $\det (W^+) > 0$ at every point. Moreover, either
\begin{enumerate}[{\rm (i)}]
\item $\pi_1(M)=0$, and $M$ admits an orientation-compatible  complex structure $J$ that makes $(M,J)$  into a del Pezzo surface, and 
relative to which  the Einstein metric $h$ becomes conformally K\"ahler; or else, 
\item $\pi_1(M)=\ZZ_2$, and $M$ is doubly covered by a del Pezzo surface   $(\hat{M},J)$ of even signature on which 
 the pull-back of the Einstein metric $h$ becomes conformally K\"ahler.   \end{enumerate}
\end{prop}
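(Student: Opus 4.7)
The plan is to combine Proposition \ref{mule} with the simply-connected Corollary to Theorem \ref{whoo}, which (via \cite{lebhem}) identifies the underlying $4$-manifold of such an Einstein metric as a del Pezzo.

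Since $h$ is Einstein, the contracted second Bianchi identity gives $\delta W^+=0$, so the hypothesis $\det(W^+)>0$ places $(M,h)$ within the scope of Proposition \ref{mule}. The resulting dichotomy reads: either (a) $b_+(M)=1$ and $M$ itself carries an orientation-compatible K\"ahler metric $g$ of positive scalar curvature with $h=s^{-2}g$; or (b) $b_+(M)=0$, where this conclusion only holds after pulling back to a double cover $\varpi:\hat{M}\to M$. In case (a), I would first verify that the Einstein constant of $h$ is positive: the K\"ahler form $\omega$ of $g$ remains $h$-harmonic (self-dual harmonicity being conformally invariant in dimension $4$) and lies in the top eigenspace of $W^+_h$, so feeding this into the Weitzenb\"ock formula \eqref{harmless} together with $\alpha_h>0$ should force $s_h>0$. (As an alternative, one may appeal to Derdzi\'{n}ski's structure theorem \cite{derd} together with \cite{lebhem}, which directly pins down $(M,g,J)$ as an extremal K\"ahler structure on a del Pezzo.) Once $\lambda>0$, Myers' theorem gives $\pi_1(M)$ finite, so the universal cover $\tilde{M}$ is compact, simply connected, Einstein, and still has $\det(W^+)>0$; the Corollary to Theorem \ref{whoo} then identifies $\tilde{M}$ with a del Pezzo surface, necessarily simply connected, so $\tilde{M}=M$. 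This establishes (i).

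In case (b), the double cover $(\hat{M},\varpi^*h)$ itself satisfies $b_+(\hat{M})=1$ together with the other hypotheses of case (a), so by the previous paragraph $\hat{M}$ is a simply-connected del Pezzo. Consequently $\pi_1(M)\cong\ZZ_2$, and $\sigma(\hat{M})=2\sigma(M)$ by multiplicativity of the Hirzebruch signature under finite covers, which makes $\sigma(\hat{M})$ even; this gives (ii). The reiteration that $\det(W^+)>0$ at every point simply records that the hypothesis remains in force globally throughout the argument.

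The main obstacle is the positivity of the Einstein constant in case (a). The Weitzenb\"ock identity applied to the harmonic K\"ahler form produces simultaneously a favorable curvature term $W^+(\omega,\omega)=\alpha|\omega|_h^2>0$ and a kinetic term $|\nabla^h\omega|^2$ of a priori indeterminate size, so some additional input is needed. My preferred resolution is to lean on the Derdzi\'{n}ski--LeBrun classification \cite{derd,lebhem} to identify the underlying complex surface as a del Pezzo directly, bypassing Myers' theorem altogether.
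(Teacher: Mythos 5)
Your preferred route --- Proposition \ref{mule}, followed by the Derdzi\'{n}ski--LeBrun classification \cite{derd,lebhem} of compact conformally K\"ahler Einstein $4$-manifolds, plus the simple-connectivity of del Pezzo surfaces and the multiplicativity of the signature under the double cover --- is exactly the paper's argument, which is stated there in a single line (``Using \cite{lebhem} and the simple-connectivity of del Pezzos\dots''). Two remarks on the alternative route you write out first. First, invoking the Corollary to Theorem \ref{whoo} inside a proof of Proposition \ref{rule} is circular as the paper is organized: Theorem \ref{whoo} is deduced from Proposition \ref{rule}, and the Corollary from Theorem \ref{whoo}; to make that route honest you would have to re-derive the del Pezzo identification for the simply-connected cover directly from Theorem \ref{summit} together with \cite{lebhem}, at which point Myers' theorem becomes superfluous anyway. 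Second, the positivity of the Einstein constant, which you flag as the main obstacle, has a standard one-line resolution that avoids the Weitzenb\"ock formula and its troublesome $|\nabla\omega|^2$ term entirely: the conformal class $[h]=[g]$ contains the metric $g$ of positive scalar curvature, so its Yamabe constant is positive, and hence any constant-scalar-curvature metric in that class --- in particular the Einstein metric $h$ --- has positive scalar curvature. But, as you ultimately conclude, neither this nor Myers is needed once one quotes \cite{lebhem} directly, so your final version of the argument coincides with the paper's.
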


Theorem \ref{whoo}  now becomes   an immediate corollary of Proposition \ref{rule}.

%

\section{The Proof of Theorem \ref{cadenza}}
\label{grist}

The method used to prove  Theorems \ref{whoo} and \ref{whoa} does not actually require $\det W^+$ to be positive. Indeed, 
in this section, we will obtain essentially the same results under the weaker assumption that   the top and middle eigenvalues $\alpha$ and  $\beta$ of $W^+$ satisfy 
$$4\beta \leq {\alpha} \neq 0$$
everywhere. The following lemma will allow us to restate this hypothesis as an effective condition on $\det (W^+)$.

\begin{lem} 
\label{glissando}
Let $(M,h)$ be an oriented  Riemannian $4$-manifold, and let $p\in M$ be a point where $W^+\neq 0$. Let $\alpha$
and $\beta$ once again denote the highest and middle eigenvalues of $W^+$ at $p$. Then 
$$\beta \leq \frac{\alpha}{4} \quad \Longleftrightarrow \quad \det (W^+) \geq -{\textstyle \frac{5}{21} \sqrt{\frac{2}{21}}}~ |W^+|^3.
$$
Moreover, both of these equivalent  statements are  conformally invariant, in the sense that if either holds at $p$ for 
the metric $h$, then both necessarily hold at $p$ for every metric $g$ which is a conformal rescaling of   $h$. 
\end{lem}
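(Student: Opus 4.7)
The plan is to reduce the equivalence to a one-variable calculus problem by normalizing $W^+$ via its top eigenvalue. Since $W^+$ is trace-free with eigenvalues $\alpha \geq \beta \geq \gamma$ summing to zero, and since $W^+ \neq 0$ forces $\alpha > 0$ (otherwise all three vanish), I set $t := \beta/\alpha$; the ordering restricts $t$ to $[-\tfrac12, 1]$, with $t=1$ and $t=-\tfrac12$ corresponding to the degenerate cases $\beta = \alpha$ and $\beta = \gamma$, respectively. Eliminating $\gamma = -\alpha - \beta$ then gives $\det(W^+) = -\alpha^{3}\, t(1+t)$ and $|W^+|^{2} = 2\alpha^{2}(1+t+t^{2})$, so the scale-free ratio
\[
\rho(t) \;:=\; \frac{\det(W^+)}{|W^+|^{3}} \;=\; \frac{-t(1+t)}{2\sqrt{2}\,(1+t+t^{2})^{3/2}}
\]
depends only on $t$, and the whole problem becomes to show that $\rho(t) \geq \rho(1/4)$ if and only if $t \leq 1/4$, together with a check of the numerical value of $\rho(1/4)$.

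The main step is to prove that $\rho$ is \emph{strictly decreasing} on $[-\tfrac12, 1]$. I would compute $\rho'(t)$ via the quotient rule; after clearing a manifestly positive factor, its sign coincides with that of the cubic $2t^{3}+3t^{2}-3t-2$. The key algebraic observation is the factorization
\[
2t^{3}+3t^{2}-3t-2 \;=\; (t-1)(2t+1)(t+2),
\]
which is strictly negative on $(-\tfrac12,1)$ since there two factors are positive and one is negative. Monotonicity follows, so $\rho(t) \geq \rho(1/4)$ iff $t \leq 1/4$, iff $\beta \leq \alpha/4$. A direct substitution at $t=1/4$ gives $\rho(1/4) = -5\sqrt{2}/(21\sqrt{21}) = -\tfrac{5}{21}\sqrt{2/21}$, matching the constant in the statement.

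Conformal invariance is then immediate from the scaling rule already established in the excerpt: under $g = f^{-2}h$, every eigenvalue of $W^+:\Lambda^+\to\Lambda^+$ rescales by the common factor $f^{2}$, so $t = \beta/\alpha$ is conformally invariant and hence so is the condition $\beta \leq \alpha/4$. Equivalently, $\det(W^+)$ and $|W^+|^{3}$ both pick up the common factor $f^{6}$, so the second inequality is conformally invariant as well. The whole argument is elementary; the only mildly delicate point is spotting the roots of the derivative cubic, but once one suspects that the degenerate values $t=1$ and $t=-\tfrac12$ are critical points, the factorization falls out and the rest is bookkeeping.
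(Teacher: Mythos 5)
Your proposal is correct and follows essentially the same route as the paper: both reduce the claim to showing that the scale-invariant ratio $\det(W^+)/|W^+|^3 = -t(1+t)/\bigl(2\sqrt{2}\,(1+t+t^2)^{3/2}\bigr)$ is a decreasing function of $t=\beta/\alpha$ on $[-\tfrac12,1]$ and then evaluate at $t=\tfrac14$, with conformal invariance following from the common rescaling of the eigenvalues. The only (cosmetic) difference is how monotonicity is verified --- you differentiate directly and factor the cubic $2t^3+3t^2-3t-2=(t-1)(2t+1)(t+2)$, whereas the paper substitutes $y=1+t+t^2$ and differentiates in $y$; your factorization is correct, so the argument goes through.
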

\begin{proof}Let $x=\beta/\alpha$, and then notice that,  because  $\alpha \geq \beta \geq -\alpha -\beta$, 
we automatically have $x\in [-\frac{1}{2},1]$. Now set $y= 1+x+x^2$, and notice that $x\mapsto y$ defines an
increasing smooth map 
$[-\frac{1}{2},1]\to [\frac{3}{4} , 3]$ because $\frac{dy}{dx} = 1+ 2x$ is non-negative for $x\geq -\frac{1}{2}$. 
But this now makes it   apparent that 
\begin{eqnarray*}
\frac{\det W^+}{|W^+|^3} &=& \frac{\alpha \beta (-\alpha - \beta) }{(\alpha^2 + \beta^2 + (-\alpha - \beta)^2)^{3/2}}
\\ &=& -\frac{x +x^2}{2^{3/2}(1+x+x^2)^{3/2}}
\\ &=&-2^{-3/2}\left(y^{-1/2}-y^{-3/2} \right) 
\end{eqnarray*}
is a decreasing function of $x\in [-\frac{1}{2},1]$,
since 
\begin{eqnarray*}
\frac{d}{dy} \left[-2^{-3/2}\left(y^{-1/2}-y^{-3/2} \right) \right] &=&-2^{-3/2}\left( -\frac{1}{2}y^{-3/2}+ \frac{3}{2}y^{-5/2} \right) \\&=& -(2y)^{-5/2}(  3 -y)
\end{eqnarray*}
is non-positive for $y\in [\frac{3}{4} , 3]$. As a consequence, 
$$
\frac{\beta}{\alpha}\leq \frac{1}{4} \quad \Longleftrightarrow \quad \frac{\det W^+}{|W^+|^3}\geq -\frac{x +x^2}{2^{3/2}(1+x+x^2)^{3/2}}\Big|_{x=\frac{1}{4}}= -\frac{5\sqrt{2}}{21\sqrt{21}}.
$$
Moreover, since both $\det (W^+)/|W^+|^3$ and $x=\beta/\alpha$ are manifestly unaltered by conformal changes of the metric,  the equivalence in
question is obviously conformally invariant. 
\end{proof}

Most of the ideas we used in \S \ref{gist} merely depend on the assumption that the top eigenvalue $\alpha$
of $W^+$ has multiplicity one everywhere. However,  the key inequality \eqref{negatron}   is  quite  different, and  strongly  depends on the 
 assumption that $\det (W^+) > 0$. Nonetheless, we can generalize this inequality as follows:

\begin{lem}
\label{arpeggio} Let $(M,h)$ be an oriented  Riemannian $4$-manifold on which  the 
top eigenvalue $\alpha_h$ of $W^+_h$ has multiplicity one everywhere, and so defines a smooth function $\alpha_h$ on 
$M$.   Also suppose that the top eigenspace $L\subset \Lambda^+$ of
$W^+$ is trivial as  real line bundle $L\to M$. Let $g=f^{-2}h$ be some conformal rescaling of $h$,   and  let $\omega$ then  be a self-dual $2$-form 
on $M$  that  satisfies \eqref{charlie} everywhere. 
Let  $\beta= \beta_g:M\to \RR$ be the  continuous function given by the 
middle eigenvalue of $W^+_g$ at each point of $M$. 
Then 
\begin{equation}
\label{positron}
W^+ (\nabla_e\omega, \nabla^e\omega) \leq \beta |\nabla \omega |^2
\end{equation}
everywhere, where all terms are to be computed  with respect to $g$. 
\end{lem}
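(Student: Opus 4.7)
The plan is to mimic the proof of Lemma \ref{bliss} word-for-word, only replacing the crude bound ``$W^+|_{\omega^\perp}\le 0$'' (which required $\det W^+>0$) with the sharp bound ``$W^+|_{\omega^\perp}\le \beta\cdot I$'', which holds unconditionally as soon as the top eigenvalue $\alpha$ is simple. The key point is that the middle eigenvalue $\beta$ of $W^+$ is precisely the top eigenvalue of the restriction of $W^+$ to the rank-two subbundle $\omega^\perp\subset\Lambda^+$.

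First, I would note that the normalization $|\omega|_g^2=2$ is constant on $M$, so differentiating gives $\langle\nabla_e\omega,\omega\rangle_g\equiv 0$ for every tangent vector $e$. In particular, at each point of $M$, the covariant derivative $\nabla\omega$ takes values in $T^*M\otimes\omega^\perp$, where $\omega^\perp\subset\Lambda^+$ denotes the pointwise orthogonal complement of $\omega$ inside the self-dual bundle.

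Next, since $W^+_g$ is self-adjoint, the simple top eigenspace $\RR\omega\subset\Lambda^+$ has a $W^+_g$-invariant orthogonal complement of rank two, which is exactly $\omega^\perp$. Because $\omega$ realizes the top eigenvalue $\alpha$, the eigenvalues of the restricted endomorphism $W^+_g|_{\omega^\perp}$ are the remaining two eigenvalues $\beta$ and $\gamma=-\alpha-\beta$ of $W^+_g$, and $\beta\ge\gamma$ by convention. Consequently,
\[
W^+(\phi,\phi)\ \le\ \beta\,|\phi|^2_g \qquad \text{for every } \phi\in\omega^\perp.
\]

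Finally, pick an orthonormal frame $\{e_a\}$ for $TM$ at the point in question. Applying the preceding inequality to each $\phi=\nabla_{e_a}\omega\in\omega^\perp$ and summing on $a$ yields
\[
W^+(\nabla^e\omega,\nabla_e\omega)\ =\ \sum_a W^+(\nabla_{e_a}\omega,\nabla_{e_a}\omega)\ \le\ \beta\sum_a|\nabla_{e_a}\omega|^2\ =\ \beta\,|\nabla\omega|^2,
\]
which is \eqref{positron}. There is really no ``hard part'' here: the only subtlety is simply  verifying that the top eigenvalue on the orthogonal complement $\omega^\perp$ is $\beta$ rather than some other quantity, and this is immediate from the fact that $\omega$ itself was singled out as the $\alpha$-eigenvector. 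The result reduces to Lemma \ref{bliss} in the special case $\beta\le 0$, recovering the earlier estimate when $\det W^+>0$.
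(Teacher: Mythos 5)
Your proposal is correct and follows essentially the same route as the paper: observe that $\nabla\omega$ takes values in $\Lambda^1\otimes\omega^\perp$ because $|\omega|_g^2$ is constant, and then use that $\omega^\perp$ is the $W^+$-invariant complement of the $\alpha$-eigenspace, on which the top eigenvalue is $\beta$, so that $W^+(\phi,\phi)\leq\beta|\phi|^2$ for all $\phi\in\omega^\perp$. The only difference is that you spell out the frame summation explicitly, which the paper leaves implicit.
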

\begin{proof} The covariant derivative   $\nabla \omega$ of $\omega$ belongs to $\Lambda^1 \otimes \omega^\perp \subset\Lambda^1 \otimes  \Lambda^+$ because
$\omega$ has constant norm with respect to $g$. The result therefore follows from the fact that $W^+(\phi , \phi) \leq \beta |\phi |^2$ for any 
$\phi \in \omega^\perp$. 
\end{proof} 

With these lemmata in hand,  a return  visit to  our previously-explored  territory immediately  reveals the following:  

\begin{thm} 
\label{vista}
Let $(M,h)$ be a compact oriented Riemannian manifold with $\delta W^+=0$. Assume that 
$W^+\neq 0$ everywhere, and  that 
$$\det (W^+) \geq -{\textstyle \frac{5}{21} \sqrt{\frac{2}{21}}}~ |W^+|^3$$
at every point. Noting that this in particular implies  that the top eigenvalue $\alpha_h: M\to \RR^+$ of $W^+_h$ defines a smooth positive function on $M$, let us also  
now suppose  that the real line bundle $L\to M$ given by the $\alpha_h$-eigenspace of $W^+_h$ is trivial. Then the conformally rescaled metric
$$g = \alpha_h^{2/3}h$$
is an orientation-compatible  K\"ahler metric of positive scalar curvature.
\end{thm}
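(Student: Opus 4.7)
The plan is to re-run the integration-by-parts argument from the proof of Theorem \ref{summit}, substituting Lemma \ref{arpeggio} for Lemma \ref{bliss} and taking extra care to preserve two ``slack'' terms along the way. First I would set $f = \alpha_h^{-1/3}$, so that $g = f^{-2}h = \alpha_h^{2/3} h$ is exactly the rescaling in the statement and satisfies $\alpha_g f \equiv 1$. Because the line bundle $L \to M$ is assumed trivial, I can choose a global self-dual 2-form $\omega$ with $W^+_g(\omega) = \alpha \omega$ and $|\omega|_g^2 = 2$; this defines a $g$-compatible almost-complex structure $J$ by $\omega = g(J\cdot, \cdot)$. The target is to show $\nabla^g \omega \equiv 0$, after which the K\"ahler and positive-scalar-curvature conclusions follow at once.

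Pairing the Weitzenb\"ock identity \eqref{initio} with $\omega \otimes \omega$, integrating, integrating by parts, and using the Hodge Weitzenb\"ock identity \eqref{harmless} exactly as in Theorem \ref{summit}, but now tracking the two pointwise rewrites
\[ -2fW^+(\nabla_e\omega, \nabla^e\omega) = -2f\beta|\nabla\omega|^2 + 2f\bigl(\beta|\nabla\omega|^2 - W^+(\nabla_e\omega, \nabla^e\omega)\bigr), \]
\[ 2f|W^+|^2|\omega|^2 = 3\alpha|\omega|^2 + f(2|W^+|^2 - 3\alpha^2)|\omega|^2, \]
I should arrive at the identity
\[
0 = \int_M \Bigl[\tfrac{1}{2}\bigl(1 - \tfrac{4\beta}{\alpha}\bigr)|\nabla\omega|^2 + 3|d\omega|^2 + 2f\bigl(\beta|\nabla\omega|^2 - W^+(\nabla_e\omega, \nabla^e\omega)\bigr) + f(2|W^+|^2 - 3\alpha^2)|\omega|^2\Bigr]\, d\mu_g.
\]
By the hypothesis $\beta \leq \alpha/4$ (rephrased via Lemma \ref{glissando}), together with Lemmas \ref{arpeggio} and \ref{megatron}, each of the four integrands is pointwise nonnegative, so each must vanish identically on $M$.

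The decisive observation will be that vanishing of the fourth summand forces equality in Lemma \ref{megatron}, i.e.\ $|W^+|^2 = \tfrac{3}{2}\alpha^2$, which is equivalent to $\beta \equiv -\alpha/2$ everywhere. But then $1 - 4\beta/\alpha = 3$ is a strictly positive constant, so vanishing of the first summand forces $\nabla\omega \equiv 0$ globally. Hence $(M, g, J)$ is K\"ahler with K\"ahler form $\omega$, and on any K\"ahler surface $\alpha = s/6$; positivity of $s$ follows from $\alpha > 0$, which in turn is guaranteed by $W^+ \neq 0$.

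The main subtlety --- and the place where the argument could easily derail --- is the bookkeeping of these two slack terms. It is tempting to fold them into the principal estimate as Theorem \ref{summit} does, but doing so recovers only $\nabla\omega = 0$ on the open locus $\{\beta < \alpha/4\}$, leaving the potentially stubborn closed set $\{\beta = \alpha/4\}$ unaddressed. Keeping the Lemma \ref{megatron} slack as a separate nonnegative contribution is precisely what allows its vanishing to be read off directly, upgrading the weak hypothesis $\beta \leq \alpha/4$ all the way to the K\"ahler identity $\beta = -\alpha/2$ at every point of $M$ in one stroke.
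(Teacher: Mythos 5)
Your proposal is correct, and it reproduces the paper's computation almost exactly — same conformal gauge \eqref{shrewd}--\eqref{cunning}, same pairing of \eqref{initio} with $\omega\otimes\omega$, same use of Lemmas \ref{glissando}, \ref{arpeggio}, and \eqref{megatron}, \eqref{harmless} — but it closes the argument differently, and in a way that is arguably cleaner. The paper folds the slack from \eqref{positron} and \eqref{megatron} into a chain of inequalities, ends up with only $0\geq 3\int_M|d\omega|^2\,d\mu$, concludes that $(M,g,\omega)$ is merely almost-K\"ahler, and then must invoke an external result, \cite[Proposition 2]{lebcake}, to upgrade ``almost-K\"ahler with $W^+(\omega,\omega)>0$ and $\delta(fW^+)=0$'' to ``K\"ahler with $s>0$.'' You instead carry the two slack terms $2f\bigl(\beta|\nabla\omega|^2-W^+(\nabla_e\omega,\nabla^e\omega)\bigr)$ and $f\bigl(2|W^+|^2-3\alpha^2\bigr)|\omega|^2$ through as separate nonnegative summands; I have checked that your displayed integral identity is exactly what the paper's manipulations produce when nothing is discarded. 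Forcing the fourth summand to vanish gives $|W^+|^2\equiv\frac32\alpha^2$, hence $\beta\equiv-\alpha/2$ by the equality case of \eqref{megatron}, and then $1-4\beta/\alpha\equiv 3$ makes the first summand's vanishing equivalent to $\nabla\omega\equiv 0$. This yields the K\"ahler conclusion directly and self-containedly, bypassing the citation; your closing remark that the naive chain of inequalities only controls $\nabla\omega$ on $\{\beta<\alpha/4\}$ correctly identifies why the extra bookkeeping (or, in the paper's version, the appeal to \cite{lebcake}) is genuinely needed. The identification $\alpha=s/6>0$ at the end is also fine, since $\omega$ is the K\"ahler form and $W^+(\omega)=\alpha\omega$ with $\alpha>0$ wherever $W^+\neq 0$.
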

\begin{proof} Let $\omega$ be a self-dual $2$-form  that satisfies \eqref{charlie} at every point of
$M$ with respect to the rescaled metric $g$. Here we have once again arranged that $g=f^{-2}h$ 
has the property that $\alpha:=\alpha_g$ satisfies 
$$\alpha f \equiv 1,$$
as in \eqref{cunning},  by choosing $f$ according to \eqref{shrewd}.
Now Lemma \ref{glissando} tells us that our hypotheses imply that $\beta \leq \alpha/4$, while
Lemma \ref{arpeggio} provides us with a crucial   inequality \eqref{positron}  involving $\beta$. 
On the other hand, \eqref{megatron} and \eqref{harmless} are completely general facts about 
$4$-dimensional geometry that in particular apply to our current situation. Assembling  these pieces, 
we therefore have 
\begin{eqnarray*} 0&=& 
\int_M \Big\langle\Big( \nabla^*\nabla fW^+ + \frac{s}{2} fW^+ - 6 fW^+\circ W^+ + 2 f|W^+|^2 I \Big), \omega \otimes
\omega \Big\rangle ~d\mu_g \\
&=& \int_M \Big[ \langle W^+ , \nabla^*\nabla (\omega\otimes \omega )\rangle  + \frac{s}{2} W^+(\omega , \omega ) 
 - 6 |W^+(\omega)|^2+ 2 |W^+|^2 |\omega |^2 \Big] f~ d\mu_g  \\
&=&  
\int_M \Big[ 
- 2W^+(\nabla_e \omega , \nabla^e \omega )-2W^+(\omega , \nabla^e\nabla_e \omega  ) 
\\&& \hspace{2in}  + \frac{s}{2} \alpha |\omega|^2 
 - 6 \alpha^2 |\omega|^2+ 2 |W^+|^2 |\omega |^2 \Big] f~ d\mu_g  \\
 &\geq &  
\int_M \Big[ -2\beta  | \nabla \omega|^2 
-2\alpha \langle \omega , \nabla^e\nabla_e \omega  \rangle  
+ \frac{s}{2} \alpha |\omega|^2 
 - 6 \alpha^2 |\omega|^2+ 3 \alpha^2 |\omega |^2 \Big] f~ d\mu_g  \\
 &\geq &  
\int_M \Big[ -\frac{\alpha}{2}  | \nabla \omega|^2 
-2\alpha \langle \omega , \nabla^e\nabla_e \omega  \rangle  
+ \frac{s}{2} \alpha |\omega|^2 
 -  3 \alpha^2 |\omega |^2 \Big] f~ d\mu_g  \\
  &=&  
\int_M \Big[ 
-\frac{1}{2}  | \nabla \omega|^2 + 2\langle \omega , \nabla^*\nabla \omega  \rangle  
+ \frac{s}{2}  |\omega|^2 
 - 3 \alpha |\omega|^2 \Big] ~(\alpha f)~ d\mu_g  \\
 &=&  
\int_M \Big[ 
\frac{3}{2}\langle \omega , \nabla^*\nabla \omega  \rangle 
+ \frac{s}{2}  |\omega|^2 
 - 3 W^+(\omega , \omega) \Big] ~ d\mu_g  \\ 
 &=&   \frac{3}{2} 
\int_M \Big\langle \omega , \nabla^*\nabla \omega
- 2 W^+(\omega) + \frac{s}{3}\omega 
 \Big\rangle 
 ~ d\mu_g  \\ 
 &=& 
 \frac{3}{2}  \int_M 
\Big\langle \omega , (d+d^*)^2 \omega \Big\rangle 
~ d\mu_g  \\ 
 &=& 
 3 \int_M |d\omega|^2 
 ~d\mu   , 
 \end{eqnarray*}
 so the self-dual $2$-form $\omega$ must actually be closed, and hence harmonic. However, since $\omega$  also  has constant norm $\sqrt{2}$,
 this means that   $(M^4,g,\omega)$ is an almost-K\"ahler manifold. But, by construction,   $W^+(\omega , \omega ) > 0$  
 and    $h=f^2g$ satisfies $\delta (W^+)=0$. Thus, by   \cite[Proposition 2]{lebcake}, our almost-K\"ahler manifold  is actually  K\"ahler,
 and has positive 
 scalar curvature.
 \end{proof}
 
 However,  a K\"ahler surface $(M,g,J)$ of positive scalar curvature 
 necessarily satisfies $\det (W^+)>0$ at every point. Moreover, a result of Yau \cite{yauruled}
 guarantees that any such $(M,g,J)$ must have vanishing geometric genus, and so enjoys the topological property that $b_+(M)=1$.
  Applying Theorem \ref{vista} either to  $M$ or to the double cover  $\hat{M}\to M$ associated with the real line bundle $L$, the same argument 
  used to prove   Proposition \ref{mule} now  yields the following:

\begin{prop} 
Let $(M,h)$ be a compact  oriented Riemannian $4$-manifold with 
$\delta W^+ =0$ that also satisfies 
$$\det (W^+) \geq -{\textstyle \frac{5}{21} \sqrt{\frac{2}{21}}}~ |W^+|^3$$
at every point. Then actually $\det (W^+)>0$ everywhere, and either
\begin{enumerate}[{\rm (i)}]
\item $b_+(M)=1$, and there is an orientation-compatible K\"ahler metric $g$ on $M$ of scalar curvature $s>0$, 
such that $h=s^{-2}g$; or else
\item $b_+(M)=0$, and there is a conformal rescaling $g$ of $h$ whose pull-back $\varpi^*g$  to a suitable double cover $\varpi: \hat{M}\to M$ 
is  a positive-scalar curvature K\"ahler metric on $\hat{M}$ that  is related to 
$\varpi^*h$  as in case {\rm (i)}. 
\end{enumerate}
\end{prop}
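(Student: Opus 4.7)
The plan is to obtain this proposition by applying Theorem~\ref{vista} (either on $M$ itself or on a suitable double cover) and then extracting the topological consequences exactly as in the proof of Proposition~\ref{mule}. First, Lemma~\ref{glissando} translates the curvature hypothesis into the pointwise eigenvalue inequality $\beta \leq \alpha/4$. Together with $W^+ \neq 0$ (which must be assumed, as in Theorem~\ref{cadenza}, since $\det(W^+)>0$ everywhere is the conclusion), this forces $\alpha > 0$ to be a simple eigenvalue of $W^+_h$ at every point. Hence $\alpha_h$ is a smooth positive function on $M$, the $\alpha_h$-eigenspace defines a smooth real line sub-bundle $L \subset \Lambda^+$, and $L$ is classified up to isomorphism by $w_1(L) \in H^1(M,\ZZ_2)$.

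Next I would split on the triviality of $L$. If $w_1(L) = 0$, Theorem~\ref{vista} applies directly, producing an orientation-compatible K\"ahler metric $g = \alpha_h^{2/3} h$ on $M$ of scalar curvature $s > 0$. Derdzi\'nski's formula $W^+_g = \mathrm{diag}(s/6,-s/12,-s/12)$ then shows $\det(W^+) > 0$ pointwise, and a standard constant rescaling of $g$ arranges $h = s^{-2} g$. Yau's vanishing theorem gives $h^{2,0} = 0$, so $b_+(M) = 1 + 2h^{2,0} = 1$; this is case (i).

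If instead $w_1(L) \neq 0$, I would pass to the double cover $\varpi: \hat{M} \to M$ obtained from the norm-$\sqrt{2}$ sections of $L$. The pull-back $\varpi^* h$ still satisfies $\delta W^+ = 0$ and the same curvature bound, but now $\varpi^* L$ is trivial, so Theorem~\ref{vista} applies on $\hat{M}$ to produce a K\"ahler metric $\hat{g}$ of positive scalar curvature on $\hat{M}$, whose K\"ahler form $\omega$ tautologically satisfies $\sigma^* \omega = -\omega$ for the deck involution $\sigma$. Since $\det(W^+)$ is a pointwise conformal scalar, its positivity on $\hat{M}$ descends to $M$. For the $b_+$ statement: any non-trivial self-dual harmonic form on $(M,h)$ would pull back to a non-trivial $\sigma$-invariant self-dual harmonic form on $(\hat{M},\varpi^* h)$; but by Yau, $b_+(\hat{M}) = 1$, so every such form is a constant multiple of $\omega$, which is $\sigma$-anti-invariant -- a contradiction forcing $b_+(M) = 0$. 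This is case (ii).

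The main delicate step is this involution-descent argument, which already drove the dichotomy in Proposition~\ref{mule}; all the genuinely new analytic work has been absorbed into Theorem~\ref{vista}, so the present proposition reduces to careful bookkeeping. The only subtlety is to verify that Lemma~\ref{glissando} indeed yields a simple top eigenvalue globally -- so that $L$ is a well-defined smooth line bundle -- and that $\varpi^* h$ inherits all the hypotheses of Theorem~\ref{vista} on $\hat{M}$, both of which are immediate from the conformal invariance statements in Lemma~\ref{glissando} and the naturality of $\delta W^+$ under covers.
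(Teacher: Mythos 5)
Your argument is correct and takes essentially the same route as the paper, whose proof of this Proposition consists precisely of applying Theorem~\ref{vista} either on $M$ or on the double cover determined by $L$, and then rerunning the Yau vanishing plus involution-descent argument from Proposition~\ref{mule} to pin down $b_+(M)$. Your observation that the hypothesis $W^+\neq 0$ must be supplied (it appears in Theorems~\ref{cadenza} and~\ref{vista} but is omitted from the Proposition's statement, and without it a point where $W^+=0$ would satisfy the hypothesis while violating the conclusion) is a correct and worthwhile catch.
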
 

Similarly, the same reasoning used to prove Proposition \ref{rule} now yields: 

\begin{prop} 
Let $(M,h)$ be a compact  oriented Einstein $4$-manifold 
 that also satisfies 
$$\det (W^+) \geq -{\textstyle \frac{5}{21} \sqrt{\frac{2}{21}}}~ |W^+|^3$$
at every point.  Then $(M,h)$ satisfies $\det (W^+) > 0$ everywhere, and either
\begin{enumerate}[{\rm (i)}]
\item $\pi_1(M)=0$, and $M$ admits an orientation-compatible  complex structure $J$ that makes $(M,J)$  into a del Pezzo surface, and 
relative to which  the Einstein metric $h$ becomes conformally K\"ahler; or else, 
\item $\pi_1(M)=\ZZ_2$, and $M$ is doubly covered by a del Pezzo surface   $(\hat{M},J)$ of even signature on which 
 the pull-back of the Einstein metric $h$ becomes conformally K\"ahler.   \end{enumerate}
\end{prop}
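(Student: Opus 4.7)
The plan is to reduce directly to the preceding proposition, after which the Einstein hypothesis lets us invoke the classification of conformally K\"ahler Einstein 4-manifolds from \cite{lebhem}. First, since $h$ is Einstein, the Ricci tensor is parallel, and the once-contracted second Bianchi identity immediately yields $\delta W^+ = 0$. The preceding proposition therefore applies verbatim, giving $\det(W^+) > 0$ everywhere on $M$, together with one of the two alternatives $b_+(M) = 1$ or $b_+(M) = 0$.

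In the case $b_+(M) = 1$, the metric $h$ is already globally conformal to an orientation-compatible K\"ahler metric $g$ of positive scalar curvature on $M$ itself, via $h = s^{-2}g$. Because $h$ is simultaneously Einstein and conformally K\"ahler on a compact oriented $4$-manifold, the classification in \cite{lebhem} forces the underlying complex surface $(M, J)$ to be a del Pezzo surface. Since del Pezzo surfaces are all simply connected, this lands us in alternative (i) with $\pi_1(M) = 0$.

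In the case $b_+(M) = 0$, the argument instead takes place on the double cover $\varpi : \hat{M} \to M$ supplied by the previous proposition. The pulled-back metric $\varpi^* h$ is again Einstein (the Einstein condition being a pointwise tensor equation), and on $\hat{M}$ it is globally conformal to a K\"ahler metric of positive scalar curvature. The previous paragraph applied to $\hat{M}$ then identifies it as a del Pezzo surface with $\pi_1(\hat{M}) = 0$, and consequently $\pi_1(M) \cong \ZZ_2$. The even-signature claim follows from the multiplicativity of the signature under finite orientation-preserving coverings (Hirzebruch), giving $\tau(\hat{M}) = 2\tau(M)$, which is automatically even.

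The genuine analytic work --- the Weitzenb\"ock estimate of Theorem \ref{vista} and the conversion of the weak inequality into strict positivity of $\det(W^+)$ --- has already been carried out in the preceding proposition, so the present statement is essentially bookkeeping modeled directly on the proof of Proposition \ref{rule}. I therefore do not expect any serious technical obstacle: the only point to verify is that the hypotheses of \cite{lebhem} are genuinely met in each of the two cases, and this is transparent from the identity $h = s^{-2}g$ that was produced in the non-Einstein version of the result.
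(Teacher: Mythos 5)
Your argument is correct and follows essentially the same route as the paper, which likewise derives $\delta W^+=0$ from the Einstein condition via the second Bianchi identity, feeds $h$ into the preceding $\delta W^+=0$ proposition, and then appeals to \cite{lebhem} together with the simple-connectivity of del Pezzo surfaces to distinguish the cases $\pi_1(M)=0$ and $\pi_1(M)=\ZZ_2$. Your explicit justification of the even-signature claim via multiplicativity of the signature under the orientation-preserving double cover is a detail the paper leaves unstated, but it is exactly the intended reasoning.
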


Theorem \ref{cadenza} is now an immediate corollary of these last Propositions.

\bigskip

\noindent
{\bf Acknowledgments.} The author would like to thank Peng Wu for bringing Theorem \ref{whoo} to his attention.  This work was
carried out while the author was on sabbatical as a Simons Fellow in Mathematics, and was also supported in part by 
NSF grant DMS-1906267.
 
\pagebreak 

%

\begin{thebibliography}{10}

\bibitem{bpv}
{\sc W.~Barth, C.~Peters, and A.~Van~de Ven}, {\em Compact Complex Surfaces},
  vol.~4 of Ergebnisse der Mathematik und ihrer Grenzgebiete (3),
  Springer-Verlag, Berlin, 1984.

\bibitem{bergb}
{\sc M.~Berger}, {\em Sur les vari\'et\'es d'{E}instein compactes}, in Comptes
  {R}endus de la {III}e {R}\'eunion du {G}roupement des {M}ath\'ematiciens
  d'{E}xpression {L}atine ({N}amur, 1965), Librairie Universitaire, Louvain,
  1966, pp.~35--55.

\bibitem{bes}
{\sc A.~L. Besse}, {\em Einstein Manifolds}, vol.~10 of Ergebnisse der
  Mathematik und ihrer Grenzgebiete (3), Springer-Verlag, Berlin, 1987.

\bibitem{bcg}
{\sc G.~Besson, G.~Courtois, and S.~Gallot}, {\em Entropies et rigidit{\'e}s
  des espaces localement sym{\'e}triques de courbure strictement n{\'e}gative},
  Geom. and Func. An., 5 (1995), pp.~731--799.

\bibitem{bohm}
{\sc C.~B{\"o}hm}, {\em Inhomogeneous {E}instein metrics on low-dimensional
  spheres and other low-dimensional spaces}, Invent. Math., 134 (1998),
  pp.~145--176.

\bibitem{bgk}
{\sc C.~P. Boyer, K.~Galicki, and J.~Koll{\'a}r}, {\em Einstein metrics on
  spheres}, Ann. of Math. (2), 162 (2005), pp.~557--580.

\bibitem{chenlebweb}
{\sc X.~X. Chen, C.~LeBrun, and B.~Weber}, {\em On conformally {K}\"ahler,
  {E}instein manifolds}, J. Amer. Math. Soc., 21 (2008), pp.~1137--1168.

\bibitem{derd}
{\sc A.~Derdzi{\'n}ski}, {\em Self-dual {K}\"ahler manifolds and {E}instein
  manifolds of dimension four}, Compositio Math., 49 (1983), pp.~405--433.

\bibitem{G1}
{\sc M.~J. Gursky}, {\em Four-manifolds with $\delta {W}\sp +=0$ and {E}instein
  constants of the sphere}, Math. Ann., 318 (2000), pp.~417--431.

\bibitem{hit}
{\sc N.~J. Hitchin}, {\em Compact four-dimensional {E}instein manifolds}, J.
  Differential Geometry, 9 (1974), pp.~435--441.

\bibitem{hitpos}
\leavevmode\vrule height 2pt depth -1.6pt width 23pt, {\em On the curvature of
  rational surfaces}, in Differential geometry (Proc. Sympos. Pure Math., Vol.
  XXVII, Part 2, Stanford Univ., Stanford, Calif., 1973), Amer. Math. Soc.,
  Providence, R. I., 1975, pp.~65--80.

\bibitem{lmo}
{\sc C.~LeBrun}, {\em Einstein metrics and {M}ostow rigidity}, Math. Res.
  Lett., 2 (1995), pp.~1--8.

\bibitem{lebhem}
\leavevmode\vrule height 2pt depth -1.6pt width 23pt, {\em Einstein metrics on
  complex surfaces}, in Geometry and {P}hysics ({A}arhus, 1995), vol.~184 of
  Lecture Notes in Pure and Appl. Math., Dekker, New York, 1997, pp.~167--176.

\bibitem{lebuniq}
\leavevmode\vrule height 2pt depth -1.6pt width 23pt, {\em On {E}instein,
  {H}ermitian 4-manifolds}, J. Differential Geom., 90 (2012), pp.~277--302.

\bibitem{lebhem10}
\leavevmode\vrule height 2pt depth -1.6pt width 23pt, {\em Einstein manifolds
  and extremal {K}\"ahler metrics}, J. Reine Angew. Math., 678 (2013),
  pp.~69--94.

\bibitem{lebcake}
\leavevmode\vrule height 2pt depth -1.6pt width 23pt, {\em Einstein metrics,
  harmonic forms, and symplectic four-manifolds}, Ann. Global Anal. Geom., 48
  (2015), pp.~75--85.

\bibitem{sunspot}
{\sc Y.~Odaka, C.~Spotti, and S.~Sun}, {\em Compact moduli spaces of del
  {P}ezzo surfaces and {K}\"ahler-{E}instein metrics}, J. Differential Geom.,
  102 (2016), pp.~127--172.

\bibitem{pr2}
{\sc R.~Penrose and W.~Rindler}, {\em Spinors and space-time. {V}ol. 2},
  Cambridge University Press, Cambridge, 1986.
\newblock Spinor and twistor methods in space-time geometry.

\bibitem{mhsung}
{\sc M.-H. Sung}, {\em K\"{a}hler surfaces of positive scalar curvature}, Ann.
  Global Anal. Geom., 15 (1997), pp.~509--518.

\bibitem{tian}
{\sc G.~Tian}, {\em On {C}alabi's conjecture for complex surfaces with positive
  first {C}hern class}, Invent. Math., 101 (1990), pp.~101--172.

\bibitem{wazi}
{\sc M.~Y. Wang and W.~Ziller}, {\em Einstein metrics on principal torus
  bundles}, J. Differential Geom., 31 (1990), pp.~215--248.

\bibitem{pengwu}
{\sc P.~Wu}, {\em Einstein four-manifolds of nonnegative determinant half
  {W}eyl curvature}.
\newblock e-print {\tt arXiv:1903.11818 [math.DG]}.

\bibitem{yauruled}
{\sc S.~T. Yau}, {\em On the curvature of compact {H}ermitian manifolds},
  Invent. Math., 25 (1974), pp.~213--239.

\end{thebibliography}

\vfill 

\noindent 
{\sc Department of Mathematics, 
Stony Brook University (SUNY),
Stony Brook, NY 11794-3651 USA} 

\medskip 

\noindent 
{e-mail:} claude@math.sunyb.edu

\bigskip 

\noindent 
{\sc Keywords:} Einstein metric, Del Pezzo surface, Weyl curvature, K\"ahler,  conformal,  self-dual.

\bigskip 

\noindent 
{\sc MSC classification:}  53C25 (Primary),  14J26,  32J15, 53C55.

\end{document}